\newtheorem{thm}{Theorem}[section]
\newtheorem{lem}[thm]{Lemma}
\newtheorem{prop}[thm]{Proposition}
\newtheorem{cor}[thm]{Corollary}
\newtheorem{conjecture}[thm]{Conjecture}
\theoremstyle{definition}
\newtheorem{defn}[thm]{Definition}
\theoremstyle{remark}
\newtheorem*{rem}{Remark}
\newcommand{\CC}{\mathbb{C}}
\newcommand{\la}{\lambda} 
\newcommand{\ZZ}{\mathbb{Z}}
\newcommand{\OO}{\operatorname*{\Omega}_{\geq}}
\title{$d$-fold partition diamonds}
\author{Dalen Dockery}
\address{Department of Mathematics, University of Tennessee, Knoxville, TN 37996, USA}
\email{ddocker5@vols.utk.edu}
\author{Marie Jameson}
\address{Department of Mathematics, University of Tennessee, Knoxville, TN 37996, USA}
\email{mjameso2@utk.edu}
\author{James A. Sellers}
\address{Department of Mathematics and Statistics, University of Minnesota Duluth, Duluth, MN 55812, USA}
\email{jsellers@d.umn.edu}
\author{Samuel Wilson}
\address{Department of Mathematics, University of Tennessee, Knoxville, TN 37996, USA}
\email{swils133@vols.utk.edu}
\begin{document}

\maketitle

\begin{abstract}
In this work we introduce new combinatorial objects called $d$--fold partition diamonds, which generalize both the classical partition function and the partition diamonds of Andrews, Paule and Riese, and we set $r_d(n)$ to be their counting function. We also consider the Schmidt type $d$--fold partition diamonds, which have counting function $s_d(n).$ Using partition analysis, we then find the generating function for both, and connect the generating functions $\sum_{n= 0}^\infty s_d(n)q^n$ to Eulerian polynomials.  This allows us to develop elementary proofs of infinitely many Ramanujan--like congruences satisfied by $s_d(n)$ for various values of $d$, including the following family:  for all $d\geq 1$ and all $n\geq 0,$ $s_d(2n+1) \equiv 0 \pmod{2^d}.$  
\end{abstract}


\section{Introduction and Statement of Results}
\label{sec:intro}

In 2001, George Andrews, Peter Paule, and Axel Riese \cite{AndrewsPauleRiese01b} defined plane partition diamonds, which are partitions whose parts are non-negative integers $a_i$ and $b_i$ which are placed at the nodes of the graph given in Figure \ref{fig:2fold}, where each directed edge indicates $\geq.$ They found that the generating function for such partitions is given by \cite[Corollary 2.1]{AndrewsPauleRiese01b}
\begin{equation}\label{eq:r2}
\prod_{n=1}^\infty \frac{1+q^{3n-1}}{1-q^n}.
\end{equation}
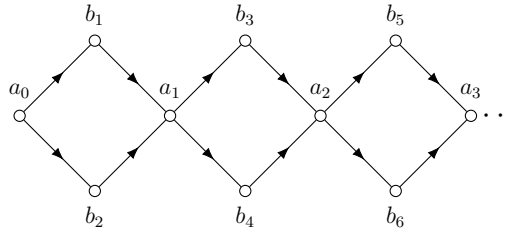
\begin{figure}
\centering
\begin{tikzpicture}
\begin{scope}[every node/.style={circle,draw,scale=0.75,inner sep=2pt}]
    \node[label={$a_0$}] (a0) at (0,0) {};
    \node[label={$b_1$}] (b1) at (1,1) {};
    \node[label=below:{$b_2$}] (b2) at (1,-1) {};
    \node[label={$a_1$}] (a1) at (2,0) {};
    \node[label={$b_3$}] (b3) at (3,1) {};
    \node[label=below:{$b_4$}] (b4) at (3,-1) {};
    \node[label={$a_2$}] (a2) at (4,0) {};
    \node[label={$b_5$}] (b5) at (5,1) {};
    \node[label=below:{$b_6$}] (b6) at (5,-1) {};
    \node[label={$a_3$}] (a3) at (6,0) {};
\end{scope}

\begin{scope}[>={Stealth[black]}, decoration={
    markings,
    mark=at position 0.6 with {\arrow{Latex}}}
    ] 
    \draw[postaction={decorate}] (a0) -- (b1);
    \draw[postaction={decorate}] (a0) -- (b2);
    \draw[postaction={decorate}] (b1) -- (a1);
    \draw[postaction={decorate}] (b2) -- (a1);
    \draw[postaction={decorate}] (a1) -- (b3);
    \draw[postaction={decorate}] (a1) -- (b4);
    \draw[postaction={decorate}] (b3) -- (a2);
    \draw[postaction={decorate}] (b4) -- (a2);
    \draw[postaction={decorate}] (a2) -- (b5);
    \draw[postaction={decorate}] (a2) -- (b6);
    \draw[postaction={decorate}] (b5) -- (a3);
    \draw[postaction={decorate}] (b6) -- (a3) node[right] {$\cdots$};
\end{scope}
\end{tikzpicture}
\caption{plane partition diamonds ($d=2$)}
\label{fig:2fold}
\end{figure}
In this paper, we aim to generalize this construction. For any positive integer $d$, we place $d$ nodes between any two nodes $a_k$ and $a_{k+1}$, so that we have the classical partition function $p(n)$ when $d=1$ and we have the plane partition diamonds of Andrews, Paule, and Riese when $d=2.$

More precisely, we define $d$-fold partition diamonds to be partitions whose parts are non-negative integers $a_i$ and $b_{i,j}$ which are placed at the nodes of the graph given in Figure \ref{fig:dfold}, where each directed edge indicates $\geq.$

\begin{figure}
\centering
\begin{tikzpicture}
\begin{scope}[every node/.style={circle,draw,scale=0.75,inner sep=2pt}]
\foreach \i/\k in {0/0,2/1,4/2,6/3}
    {\node[label={$a_{\k}$}] (a\k) at (\i,0) {};}
\foreach \i in {1,2,3}
    {\node[] (b1\i) at (1,-0.5*\i+1.5) {};
     \node[] (b2\i) at (3,-0.5*\i+1.5) {};
     \node[] (b3\i) at (5,-0.5*\i+1.5) {};
     \node[] (b\i d) at (2*\i-1,-1) {};}
\foreach \i in {1,2,3}
    {\node also [label=above:$b_{\i, 1}$] (b\i 1);
     \node also [label=below:$b_{\i, d}$] (b\i d);}
\end{scope}

\begin{scope}[>={Stealth[black]}, decoration={
    markings,
    mark=at position 0.6 with {\arrow{Latex}}}]
\foreach \i/\k in {0/1,1/2,2/3}
    {\foreach \j in {1,2,3,d}
    {\draw[postaction=decorate] (a\i) -- (b\k\j);
     \draw[postaction=decorate] (b\k\j) -- (a\k);
    }}
\foreach \i in {1,3,5}
    {\node[] at (\i,-0.33) {$\vdots$};}
\node[right] at (6,0) {$\cdots$};
\end{scope}
\end{tikzpicture}
\caption{$d$-fold partition diamonds}
\label{fig:dfold}
\end{figure}
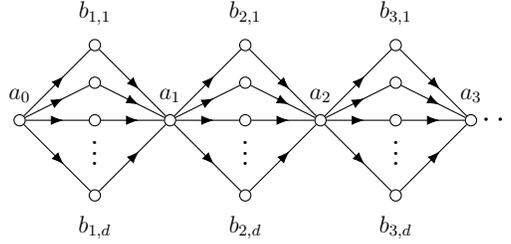

Letting $r_d(n)$ denote the number of $d$-fold partition diamonds of $n,$ we then have that
\begin{align*}
\sum_{n=0}^\infty r_1(n)q^n &= \prod_{n=1}^\infty \frac{1}{1-q^n},\\
\sum_{n=0}^\infty r_2(n)q^n &= \prod_{n=1}^\infty \frac{1+q^{3n-1}}{1-q^n},\\
\sum_{n=0}^\infty r_3(n)q^n &= \prod_{n=1}^\infty \frac{1 + 2q^{4n-2}(1+q) + q^{8n-3}}{1-q^n}.
\end{align*}
Note that this corresponds to the classical partition function when $d=1$ and to \eqref{eq:r2} when $d=2.$ More generally, we have the following result.
\begin{thm}\label{thm:genfcnforall}
The generating function for the number of $d$-fold partition diamonds is given by
\[\sum_{n=0}^\infty r_d(n)q^n =\prod\limits_{n=1}^{\infty} \frac{F_{d}(q^{(n-1)(d+1)+1},q)}{1-q^n} ,\]
where $F_d$ is a polynomial that is defined recursively in Lemma \ref{lem:basecase}.
\end{thm}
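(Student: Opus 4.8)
The plan is to use partition analysis, with the crucial observation that the inequalities in Figure \ref{fig:dfold} decouple cell by cell. First I would record that in the $k$-th cell every intermediate part must satisfy $a_{k-1}\ge b_{k,j}\ge a_k$, so a nonempty cell already forces $a_{k-1}\ge a_k$; thus any valid labeling forces the main chain $a_0\ge a_1\ge a_2\ge\cdots$ (finitely supported), and conversely, once the $a_i$ form such a chain the $d$ parts $b_{k,1},\dots,b_{k,d}$ may be chosen \emph{independently} in the interval $[a_k,a_{k-1}]$. Summing out each $b_{k,j}$ as a geometric series over that interval then gives
\[
\sum_{n\ge0} r_d(n)q^n \;=\; \sum_{a_0\ge a_1\ge\cdots\ge0} q^{\sum_{k\ge0}a_k}\prod_{k\ge1}\Bigl(\sum_{b=a_k}^{a_{k-1}}q^{b}\Bigr)^{\!d}.
\]

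Next I would pass to the gap coordinates $c_i:=a_{i-1}-a_i\ge0$, so that $a_k=\sum_{i>k}c_i$ and the sum over chains becomes an unconstrained sum over independent variables $c_1,c_2,\dots\ge0$. Writing $\sum_{b=a_k}^{a_{k-1}}q^{b}=q^{a_k}[c_k+1]_q$ with $[c+1]_q=1+q+\cdots+q^{c}$, and collecting the power of $q$ attached to each $c_i$, a short bookkeeping computation yields total exponent $\sum_i\bigl[(d+1)(i-1)+1\bigr]c_i$. Consequently the generating function factors completely,
\[
\sum_{n\ge0} r_d(n)q^n \;=\; \prod_{i\ge1} g\!\left(q^{(d+1)(i-1)+1}\right),\qquad g(x)=\sum_{c\ge0} x^{c}\,[c+1]_q^{\,d}.
\]
The appearance of the exponent $(d+1)(i-1)+1$ here matches the statement exactly, and is where the precise shape of the answer is pinned down.

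It then remains to identify the local factor $g$ with the claimed rational function. Here I would invoke Lemma \ref{lem:basecase}, whose content I expect to be precisely the rationality of $g$ together with the recursion for its numerator: namely $g(x)\prod_{m=0}^{d}\bigl(1-xq^{m}\bigr)=F_d(x,q)$ for a polynomial $F_d$ defined recursively in $d$ (a $q$-analogue of the Eulerian recursion). As sanity checks one finds $F_1=1$ and $F_2(x,q)=1+xq$, recovering the stated series for $r_1$ and $r_2$, and $F_3(x,q)=1+2xq+2xq^2+x^2q^3$ recovers the series for $r_3$. Substituting $x=q^{(d+1)(n-1)+1}$ makes the denominator of the $n$-th factor equal to $\prod_{m=0}^{d}\bigl(1-q^{(d+1)(n-1)+1+m}\bigr)$.

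The decisive final step is a reindexing of the denominators. As $n$ runs over $1,2,3,\dots$, the exponent blocks $\{(d+1)(n-1)+1,\dots,(d+1)n\}$ tile the positive integers, so
\[
\prod_{n\ge1}\prod_{m=0}^{d}\bigl(1-q^{(d+1)(n-1)+1+m}\bigr) \;=\; \prod_{j\ge1}\bigl(1-q^{j}\bigr).
\]
Distributing a single factor $1-q^{n}$ to the $n$-th numerator $F_d\!\left(q^{(d+1)(n-1)+1},q\right)$ then produces exactly $\prod_{n\ge1}F_d\!\left(q^{(n-1)(d+1)+1},q\right)/(1-q^{n})$, as claimed. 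I expect the genuine difficulty to sit inside Lemma \ref{lem:basecase} (establishing that $g$ is rational with denominator $\prod_{m=0}^d(1-xq^m)$ and extracting the recursion for $F_d$); once that is granted, the only delicate points remaining in the theorem proper are the cell-decoupling justification and the tiling identity, both of which are elementary.
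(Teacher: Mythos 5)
Your proof is correct, but it takes a genuinely different route from the paper. The paper never works directly with the one-variable series: it proves the fully refined, finite-length closed form for $D_{d,n}(q_0,\ldots,q_n;w)$ (Theorem \ref{thm:mainthm}) by induction on $n$ via MacMahon's $\Omega_{\geq}$ calculus, using the crude form (Lemma \ref{lem:prop2.1}), the partial-fraction lemma (Lemma \ref{lem:lem21}), and the shift identity (Lemma \ref{lem:cor22}), and then deduces Theorem \ref{thm:genfcnforall} by specializing $q_0=\cdots=q_n=w=q$ and letting $n\to\infty$; under that specialization $Q_kw^{dk}=q^{(d+1)k+1}$, and the denominator blocks tile $\prod_{j\ge1}(1-q^j)$ exactly as in your final reindexing step. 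You bypass the induction (and Lemmas \ref{lem:lem21} and \ref{lem:cor22}) entirely: the cell-decoupling observation and the gap coordinates $c_i=a_{i-1}-a_i$ factor the specialized generating function as $\prod_{i\ge1}g\bigl(q^{(d+1)(i-1)+1}\bigr)$, a computation which checks out (the exponent bookkeeping $\sum_i\bigl[(d+1)(i-1)+1\bigr]c_i$ is right). Your use of Lemma \ref{lem:basecase} needs only a one-line bridge: as stated it concerns $D_{d,1}(q_0,q_1;w)$ and carries the extra factor $(1-q_0q_1w^d)^{-1}$, but setting $q_1=0$ (or running your $n=1$ decoupling) strips that factor and yields exactly your claimed identity $g(x)\prod_{m=0}^{d}(1-xq^m)=F_d(x,q)$; your sanity checks $F_2(x,q)=1+xq$ and $F_3(x,q)=1+2xq+2xq^2+x^2q^3$ agree with the paper's recursion and with the displayed series for $r_2$ and $r_3$. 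The trade-off: the paper's machinery buys the stronger multivariate statement, which simultaneously yields Theorem \ref{thm:genfcnforlinks} at $w=1$ and the finite-length refinements, while your argument is shorter and more elementary for Theorem \ref{thm:genfcnforall} itself --- and it would in fact give Theorem \ref{thm:genfcnforlinks} just as directly, since dropping the weight on the $b$-nodes turns the local factor into $\sum_{c\ge0}(c+1)^dx^c=A_d(x)/(1-x)^{d+1}$, precisely the Euler identity invoked in Corollary \ref{cor:2.2}.
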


In fact, we can also prove an analogous result regarding Schmidt type $d$-fold partition diamonds, which are a variant obtained by summing only the links $a_0 + a_1 + \cdots + a_k$ (i.e., omitting the $b_i$'s when expressing $n$ as a sum) as in \cite{AndrewsPaule22}. Letting $s_d(n)$ denote the number of Schmidt type $d$-fold partition diamonds of $n$, we have that
\begin{align*}
\sum_{n=0}^\infty s_1(n)q^n &= \prod_{n=1}^\infty \frac{1}{(1-q^n)^2},\\
\sum_{n=0}^\infty s_2(n)q^n &= \prod_{n=1}^\infty \frac{1+q^n}{(1-q^n)^3},\\
\sum_{n=0}^\infty s_3(n)q^n &= \prod_{n=1}^\infty \frac{1+4q^n+q^{2n}}{(1-q^n)^4}.
\end{align*}
Note that the generating functions for $d=1$ and $d=2$ given above were known previously \cite[Theorems 2 and 4]{AndrewsPaule22}. More generally, we have the following result.

\begin{thm}\label{thm:genfcnforlinks}
The generating function for the number of Schmidt type $d$-fold diamond partitions is given by 
\[\sum_{n=0}^\infty s_d(n)q^n = \prod_{n=1}^\infty \frac{A_{d}(q^n)}{(1-q^{n})^{d+1}}.\]
\end{thm}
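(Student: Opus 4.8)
The plan is to use partition analysis (the Omega operator) to compute the generating function directly, in parallel with the strategy that must underlie Theorem~\ref{thm:genfcnforall}. For the Schmidt type, the key simplification is that we only track the sum $a_0 + a_1 + \cdots + a_k$, so the $b_{i,j}$ variables appear only through the order constraints, not in the final weight. I would introduce a variable $x$ marking each $a_i$ and write the full generating function over all valid fillings of the graph in Figure~\ref{fig:dfold} as an iterated application of $\OO$ to enforce the inequalities along each diamond. Because the diamonds are chained, the computation telescopes: after resolving the $b$-variables in one diamond between $a_{k-1}$ and $a_k$, one is left with a rational function in the $a$-variables times a polynomial factor depending only on $d$.

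\emph{Key steps, in order.} First I would set up a single ``building block'' generating function for one diamond: fixing the incoming value at $a_{k-1}$ and the outgoing value at $a_k$ with $a_{k-1} \le b_{k,j} \le a_k$ wait---the edges run $a_{k-1} \to b_{k,j} \to a_k$, so each $b_{k,j}$ satisfies $a_{k-1} \ge b_{k,j} \ge a_k$ is reversed; reading the arrows, $a_{k-1} \ge b_{k,j}$ and $b_{k,j} \ge a_k$, so I sum $\prod_j$ over $a_{k-1} \ge b_{k,j} \ge a_k$, giving a factor counting the $d$ intermediate parts subject only to lying in $[a_k, a_{k-1}]$. Since in the Schmidt type the $b$'s are \emph{not} weighted, each diamond contributes a factor equal to the number of weakly decreasing... no, the $b_{k,j}$ are independent of each other, so the count is $(a_{k-1}-a_k+1)^d$, a polynomial of degree $d$ in the gap $g_k := a_{k-1}-a_k$. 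Second, I would write the total generating function as a sum over the chain $a_0 \ge a_1 \ge \cdots$ of $x^{a_0+\cdots+a_k}\prod_k (g_k+1)^d$. Third, I would change variables to the gaps $g_k \ge 0$ together with the terminal value, converting the sum into a product of geometric-type series $\sum_{g\ge 0}(g+1)^d t^{g}$ for appropriate monomials $t$ in $q$, where the exponent of $q$ attached to gap $g_k$ accounts for the contribution of $g_k$ to all the partial sums $a_0 + \cdots + a_k$ that it feeds into.

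\emph{Identifying the Eulerian/$A_d$ factor.} The series $\sum_{g\ge 0}(g+1)^d t^{g}$ is precisely $A_d(t)/(1-t)^{d+1}$, where $A_d$ is the Eulerian polynomial (this is the standard Worpitzky-type identity), which is exactly the numerator promised in the statement and the bridge to the Eulerian polynomials advertised in the abstract. I would then check that the change of variables distributes the powers of $q$ so that the $k$-th gap carries $q^{k}$ (or the appropriate shift), so that assembling the product over $k \ge 1$ yields $\prod_{n=1}^\infty A_d(q^n)/(1-q^n)^{d+1}$. Finally I would verify the small cases $d=1,2,3$ against the explicitly listed generating functions, confirming $A_1(q)=1$, $A_2(q)=1+q$, and $A_3(q)=1+4q+q^2$, which matches the known Eulerian polynomials.

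\emph{Main obstacle.} The hard part will be the bookkeeping in the change of variables: correctly tracking how each gap $g_k$ contributes to the weight $a_0+a_1+\cdots$ across all the links that lie below it, and confirming that this produces exactly the single power $q^n$ in the $n$-th factor rather than some convolution of gaps. Getting the exponent alignment right---so that the product cleanly separates into one Eulerian factor per $n$---is the crux; once the sum factorizes, recognizing each factor as $A_d(q^n)/(1-q^n)^{d+1}$ is immediate from the Worpitzky identity.
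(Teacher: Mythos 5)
Your proof is correct, and it takes a genuinely different route from the paper. The paper derives Theorem~\ref{thm:genfcnforlinks} as a specialization of the multivariate Theorem~\ref{thm:mainthm}, which is proved by induction on the length of the chain using MacMahon's $\Omega_{\geq}$ operator (via Lemmas~\ref{lem:basecase}--\ref{lem:cor22}); setting $q_i = q$ and $w=1$ gives $\prod_n F_d(q^n,1)/(1-q^n)^{d+1}$, and the Eulerian factor is then identified by showing $F_d(q,1)$ satisfies the recurrence \eqref{eulerian_poly_recur}. You bypass all of that: since the $b$'s carry no weight in the Schmidt setting, each diamond's interior nodes are mutually unconstrained and independently range over $[a_k, a_{k-1}]$, contributing exactly $(g_k+1)^d$ with $g_k = a_{k-1}-a_k$; the weight is $\sum_i a_i = \sum_k k\,g_k$, so the generating function factors as $\prod_{n\geq 1}\sum_{j\geq 0}(j+1)^d q^{jn}$, and Euler's identity $\sum_{j\geq 0}(j+1)^d t^j = A_d(t)/(1-t)^{d+1}$ finishes it. This is precisely the combinatorial picture the paper records only as a remark after Corollary~\ref{cor:2.2} (and Corollary~\ref{cor:2.2} itself, which the paper deduces \emph{from} the theorem, is the intermediate step in your argument, so you have in effect reversed the paper's logical order --- legitimately, since Euler's identity is independent). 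What each approach buys: yours is shorter, elementary, and makes the appearance of the Eulerian polynomial transparent rather than an artifact of a recurrence computation; the paper's partition-analysis machinery, by contrast, simultaneously yields the refined generating function $D_{d,n}$ in all variables and Theorem~\ref{thm:genfcnforall}, where $w=q$ means the interior nodes contribute to the weight and your clean per-diamond count $(g_k+1)^d$ no longer applies. Two small points you wobbled on but resolved correctly: the arrow directions give $a_{k-1}\geq b_{k,j}\geq a_k$, and the exponent bookkeeping is just that $g_k$ appears in each of $a_0,\dots,a_{k-1}$, hence carries $q^{kg_k}$; the formal manipulation is justified because only finitely many $a_i$ are nonzero.
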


Here, $A_d(q)$ denotes the $d$th Eulerian polynomial, which is defined by $A_0(q)=1$ and, for all $d\geq 1,$ 
\begin{equation} \label{eulerian_poly_recur}
A_d(q) = (1+(d-1)q)A_{d-1}(q)+q(1-q)A'_{d-1}(q).
\end{equation}

Both Theorems \ref{thm:genfcnforall} and \ref{thm:genfcnforlinks} arise as special cases of a more technical theorem, Theorem \ref{thm:mainthm}, which will be proved using MacMahon's partition analysis. 

In Section 2, we will describe the MacMahon operator and use it to understand a generating function for $d$-fold partition diamonds; this will allow us to state our main technical theorem, Theorem \ref{thm:mainthm}. In Section \ref{sec:prelimresults}, we will prove some preliminary results needed to prove Theorem \ref{thm:mainthm}, which we do in Section \ref{sec:mainthm}. Finally, in Section \ref{sec:congproofs}, we prove several infinite families of Ramanujan--like congruences satisfied by the functions $s_d(n)$ in elementary fashion. For example, we prove that for all $d\geq 1$ and all $n\geq 0,$ \[s_d(2n+1) \equiv 0 \pmod{2^d}.\]

\section{Background and Notation} \label{sec:background}

In order to describe the generating functions that arise in studying $d$-fold diamond partitions, we start by introducing the MacMahon operator $\Omega_{\geq},$ which is the primary tool for computing these generating functions. The discussion below is inspired by the landmark series of papers on the subject by Andrews, Paule, and others \cite{AndrewsPauleRiese01a, AndrewsPauleRiese01b, AndrewsPaule22}.

\subsection{The MacMahon Operator}

First we define MacMahon's Omega operator $\OO.$

\begin{defn}
The operator $\OO$ is given by
\[\OO \sum_{s_1=-\infty}^\infty \cdots \sum_{s_r=-\infty}^\infty A_{s_1,\ldots,s_r} \la_1^{s_1} \cdots \la_r^{s_r} \coloneqq \sum_{s_1=0}^\infty \cdots \sum_{s_r=0}^\infty A_{s_1,\ldots,s_r},\]
        where the domain of the $A_{s_1,\ldots,s_r}$ is the field of rational functions over $\CC$ in several complex variables and the $\lambda_i$ are restricted to a neighborhood of the circle $|\la_i|=1.$ In addition, the $A_{s_1,\ldots,s_r}$ are required to be such that any of the series involved is absolutely convergent within the domain of the definition of $A_{s_1,\ldots,s_r}.$
\end{defn}

We will always have $\OO$ operate on variables denoted by letters $\la, \mu$ from the Greek alphabet (so that letters from the Latin alphabet will be unaffected by $\OO$).

The benefit of using the MacMahon operator comes from our ability to compute it quickly using elimination formulae. MacMahon \cite[pp. 102-103]{MacMahon20} gave a list of some of these, e.g.,
\begin{equation} \label{eq:MacMahon1}
\OO \frac{1}{(1-\la x)(1-\la^{-1} y)} = \frac{1}{(1-x)(1-xy)}.
\end{equation}
This formula can be generalized as follows. 

\begin{prop}\label{prop:MacMahon}
For $d$ a positive integer and $j$ an integer, we have that
\begin{multline*}
    \OO \frac{\lambda^j}{(1-\la x_1)\cdots(1-\la x_d)(1-\la^{-1}y)} = \frac{1}{(1-y)}\left[\frac{1}{(1-x_1)(1-x_2)\cdots(1-x_d)}\right. \\ \left.-\frac{y^{j+1}}{(1-x_1y)(1-x_2y)\cdots(1-x_dy)}\right] .
\end{multline*}
\end{prop}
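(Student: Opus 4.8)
The plan is to prove the identity by expanding the rational function as an iterated Laurent series in $\la$ and then invoking the definition of $\OO$, which discards every monomial whose total exponent of $\la$ is negative and sets $\la=1$ in what remains. Writing
\[\frac{1}{1-\la x_i} = \sum_{k_i=0}^\infty \la^{k_i}x_i^{k_i} \quad (1\le i\le d), \qquad \frac{1}{1-\la^{-1}y} = \sum_{m=0}^\infty \la^{-m}y^m,\]
and multiplying through by $\la^j$, the expression inside $\OO$ becomes
\[\sum_{k_1,\dots,k_d,\,m\ge 0} \la^{\,j+k_1+\cdots+k_d-m}\, x_1^{k_1}\cdots x_d^{k_d}\, y^m ,\]
so that the power of $\la$ attached to each term is exactly $j+k_1+\cdots+k_d-m$.

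Applying $\OO$ then retains precisely the terms with $j+k_1+\cdots+k_d-m\ge 0$; for a fixed tuple $(k_1,\dots,k_d)$ this is the condition $0\le m\le j+k_1+\cdots+k_d$. Summing the resulting finite geometric series in $m$ gives
\[\sum_{m=0}^{\,j+k_1+\cdots+k_d} y^m = \frac{1-y^{\,j+k_1+\cdots+k_d+1}}{1-y}.\]
Factoring out $1/(1-y)$ and splitting the numerator into its two pieces (using $\prod_i x_i^{k_i}\,y^{\,j+\sum_i k_i+1} = y^{\,j+1}\prod_i(x_iy)^{k_i}$), the sum over the $k_i$ separates as
\[\frac{1}{1-y}\left[\sum_{k_1,\dots,k_d\ge 0} \prod_{i=1}^d x_i^{k_i} \;-\; y^{\,j+1}\sum_{k_1,\dots,k_d\ge 0}\prod_{i=1}^d (x_iy)^{k_i}\right],\]
and recognizing each inner sum as a product of $d$ geometric series, namely $\prod_{i=1}^d (1-x_i)^{-1}$ and $\prod_{i=1}^d(1-x_iy)^{-1}$ respectively, reproduces exactly the right-hand side of the proposition.

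The step I expect to require the most care is the bookkeeping of this $\la$-exponent constraint, and in particular its interaction with the sign of $j$. When $j\ge 0$ the inequality $m\le j+k_1+\cdots+k_d$ is satisfiable for every choice of the $k_i$, so the full geometric products above reappear and the argument goes through verbatim; this is the case I would present in detail, verifying the base instance $d=1$, $j=0$ against MacMahon's formula \eqref{eq:MacMahon1} as a sanity check before writing out the general $d$. For negative $j$ the admissible tuples are cut down to those with $k_1+\cdots+k_d\ge -j$, so one must track how this truncation affects the two sums before the stated closed form can be recovered; since only the range $j\ge 0$ is needed in the sequel, I would treat that case as the main argument and address the sign issue separately.
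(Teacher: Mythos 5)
Your proposal is correct and takes essentially the same route as the paper's own proof: expand every factor as a geometric series, let $\OO$ impose the constraint $0\le m\le j+k_1+\cdots+k_d$, sum the resulting finite geometric series in $m$, and resum the two pieces as products of geometric series in the $x_i$ and $x_iy$. Your caution about negative $j$ is in fact sharper than the paper, whose proof silently applies $\sum_{m=0}^{N}y^m=\frac{1-y^{N+1}}{1-y}$ to every tuple: for $j\le -2$ the tuples with $k_1+\cdots+k_d+j\le -2$ violate this identity and the stated closed form genuinely fails (e.g.\ $d=1$, $j=-2$, $x_1=0$ gives $0$ on the left but $-1/y$ on the right), so the proposition is only valid for $j\ge -1$ --- which covers every use made of it later, where $j=i\ge 0$.
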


\begin{proof}
Expanding the left hand side using geometric series and applying the definition of $\OO$, we obtain
\begin{align*}
\OO \frac{\lambda^j}{(1-\lambda x_1)\cdots(1-\lambda x_d)(1-\lambda^{-1}y)} &= \OO\sum_{a_1,\ldots,a_{d+1}\geq 0}\lambda^{j+a_1+\cdots+a_d-a_{d+1}}x_1^{a_1}\cdots x_d^{a_d}y^{a_{d+1}} \\
&= \OO\sum_{\substack{a_1,\ldots,a_d\geq 0 \\ 0\, \leq\, k \,\leq\, {a_1+\cdots+a_d+j}}} \lambda^kx_1^{a_1}\cdots x_d^{a_d}y^{a_1+\cdots+a_d+j-k} \\
&= \sum_{a_1,\ldots,a_d \geq 0} x_1^{a_1}\cdots x_d^{a_d} \sum_{k=0}^{a_1+\cdots+a_{d}+j} y^k
\\
&=
\sum_{a_1,\ldots,a_d\geq 0} x_1^{a_1}\cdots x_d^{a_d} \left[\frac{1-y^{a_1+\cdots+a_d+j+1} }{1-y} \right],
\end{align*}
and the result follows by simplifying the geometric series.
\end{proof}

\subsection{Generating Functions for $d$-fold Partition Diamonds} \label{sec:notation}

Following the work of Andrews, Paule, and Riese in \cite{AndrewsPauleRiese01b}, for integers $d, n\geq 1$ we set
\[D_{d,n} \coloneqq D_{d,n}(q_0, q_1, \ldots, q_n; w) \coloneqq \sum q_{0}^{a_0}\cdots q_{n}^{a_n} w^{\sum_{j,k} b_{j,k}}\]
where the (outer) sum for $D_{d,n}$ ranges over all non-negative integers $a_i$ and $b_{j,k}$ which satisfy the inequalities encoded by Figure \ref{fig:dfoldlengthn}. Thus, $D_{d,n}$ is the generating function for all $d$-fold partition diamonds of fixed length $n$, which contain nodes $a_i$ and $b_{j,k}$ for $0\leq i\leq n$, $1\leq j\leq n$, and $1\leq k\leq d$; each part (i.e., vertex of the graph) corresponds to some $q_i$ or $w$ in this generating function.

\begin{figure}
\centering
\begin{tikzpicture}
\begin{scope}[every node/.style={circle,draw,scale=0.75,inner sep=2pt}]
\foreach \i/\k in {0/0,2/1,4/2,5/n-1,7/n}
    {\node[] (a\k) at (\i,0) {};}
\foreach \i in {1,2,3}
    {\node[] (b1\i) at (1,-0.5*\i+1.5) {};
     \node[] (b2\i) at (3,-0.5*\i+1.5) {};
     \node[] (bn\i) at (6,-0.5*\i+1.5) {};}
\foreach \i/\k in {1/1,2/2,3.5/n}
    { \node[] (b\k d) at (2*\i-1,-1) {};}
\foreach \i in {1,2,n}
    {\node also [label=above:$b_{\i, 1}$] (b\i 1);
     \node also [label=below:$b_{\i, d}$] (b\i d);}
\foreach \k in {0,1,2,n}{
\node also [label={[label distance= 0.2cm]:$a_{\k}$}] (a\k);
}
\foreach \k in {n-1}{\node also [label={[label distance= 0.01cm]90:$a_{n-1}$}] (a\k);}
\end{scope}
\begin{scope}[>={Stealth[black]}, decoration={
    markings,
    mark=at position 0.6 with {\arrow{Latex}}}]
\foreach \i/\k in {0/1,1/2,n-1/n}
    {\foreach \j in {1,2,3,d}
    {\draw[postaction=decorate] (a\i) -- (b\k\j);
     \draw[postaction=decorate] (b\k\j) -- (a\k);
    }}
\foreach \i in {1,3,6}
    {\node[] at (\i,-0.33) {$\vdots$};}
\node[] at (4.5,0) {$\cdots$};   
\end{scope}
\end{tikzpicture}
\caption{$d$-fold partition diamond of length n.}
\label{fig:dfoldlengthn}
\end{figure}
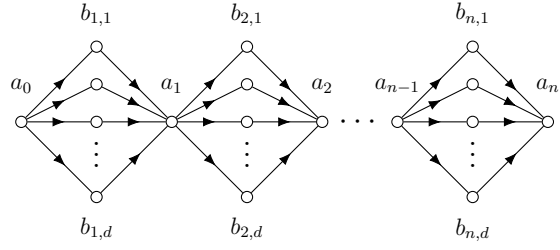
Using this notation, we are now ready to state our main technical theorem.
\begin{thm} \label{thm:mainthm}
For $d\geq 1$ and $n\geq 1,$
\[D_{d,n}(q_0, \ldots, q_n; w) = \left(\prod_{k=0}^{n-1} \frac{F_d(Q_kw^{dk},w)}{(1-Q_kw^{dk})(1-Q_kw^{dk+1})\cdots (1-Q_kw^{dk+d})}\right) \frac{1}{1-Q_nw^{dn}},\]
where $F_d$ is the polynomial that is defined recursively in Lemma \ref{lem:basecase}, and $Q_k \coloneqq q_0q_1\dots q_k$ for all $k\geq 0.$
\end{thm}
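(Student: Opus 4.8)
The plan is to induct on the length $n$, attaching one diamond to the right end at each step and removing the resulting auxiliary variable with MacMahon's operator $\OO$.

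\textbf{Encoding and base case.} First I would realize $D_{d,n}$ as an image under $\OO$: each directed edge $x\to y$ in Figure \ref{fig:dfoldlengthn} records the inequality $x\ge y$, so assigning a geometric factor to every node and a Greek variable to the constraints writes $D_{d,n}$ as $\OO$ of a product of factors $1/(1-\mu^{\pm 1}(\cdot))$, with the $q_i$ and $w$ recording the parts. For $n=1$ the graph is a single diamond $a_0\ge b_{1,j}\ge a_1$; summing the $d$ independent middle nodes for fixed endpoints replaces them by $\left(\sum_{b=a_1}^{a_0}w^b\right)^{d}$, and evaluating the remaining two-variable sum is exactly the computation of Lemma \ref{lem:basecase}. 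This both defines $F_d$ and supplies the base case $D_{d,1}=\frac{F_d(Q_0,w)}{(1-Q_0)(1-Q_0w)\cdots(1-Q_0w^{d})}\cdot\frac{1}{1-Q_1w^{d}}$, with $Q_0=q_0$ and $Q_1=q_0q_1$.

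\textbf{Inductive step.} Assume the formula for length $n-1$ in a marked form where the tail node $a_{n-1}$ carries an extra indeterminate $\lambda$, so that $\lambda$ occurs only in the final factor $1/(1-\lambda Q_{n-1}w^{\,d(n-1)})$ and all earlier factors are untouched. I would then append the diamond $a_{n-1}\ge b_{n,j}\ge a_n$, whose new data are the part $a_n$ (weight $q_n$) and the $d$ middle parts $b_{n,j}$ (each weight $w$). The key simplification is to sum the middle parts out first: for fixed $a_{n-1}\ge a_n$ they contribute $\left(\sum_{b=a_n}^{a_{n-1}}w^b\right)^{d}$, reducing the new diamond to the single inequality $a_{n-1}\ge a_n$ carrying a $w$-weight depending on both endpoints. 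Expanding this $d$th power binomially yields a sum $\sum_{i=0}^{d}\binom{d}{i}(-1)^i(\cdots)$ whose terms are geometric in $a_{n-1}$ and $a_n$; pairing them against the marked factor, recording $a_n$ with $q_n\lambda^{-1}$, and eliminating $\lambda$ by \eqref{eq:MacMahon1} (equivalently Proposition \ref{prop:MacMahon}) converts the old tail into $\frac{F_d(Q_{n-1}w^{\,d(n-1)},w)}{\prod_{i=0}^{d}(1-Q_{n-1}w^{\,d(n-1)+i})}\cdot\frac{1}{1-Q_nw^{dn}}$, using Lemma \ref{lem:basecase} to name the numerator $F_d$. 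This is precisely the length-$n$ formula, and re-marking $a_n$ by $q_n\mapsto q_n\lambda$ restores the marked hypothesis, closing the induction.

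\textbf{Index and power bookkeeping.} Appending on the right is deliberate, since it leaves every earlier factor unchanged and requires no reindexing. To see the shifts are correct I would pass to the gap coordinates $e_k=a_{k-1}-a_k$ and $c_{k,j}=b_{k,j}-a_k$, in which the statistic reorganizes multiplicatively: the base part $a_n$ contributes $(Q_nw^{dn})^{a_n}$, the gap $e_k$ contributes $(Q_{k-1}w^{\,d(k-1)})^{e_k}$, and each $c_{k,j}$ contributes $w^{c_{k,j}}$, with the diamond constraints becoming the independent conditions $e_k\ge c_{k,j}\ge 0$. The power $w^{dn}$ on the new base $a_n$ reflects that all $dn$ middle nodes lie above it, and the demoted node $a_{n-1}$ becomes the gap variable $X=Q_{n-1}w^{\,d(n-1)}$ of the new factor; checking that these weights match the elimination output is the routine but bookkeeping-heavy part.

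\textbf{Main obstacle.} The one genuinely delicate point is that each diamond has $d$ \emph{parallel} middle nodes, so a single auxiliary variable cannot enforce all of $a_{n-1}\ge b_{n,j}$ simultaneously (it would impose only a constraint on $\sum_j b_{n,j}$, which is the wrong, weaker condition). This is exactly why I would sum the middle nodes out \emph{before} applying $\OO$, trading the $d$ parallel inequalities for one inequality with a binomially expanded weight. Reconciling the resulting expression with the polynomial $F_d$ of Lemma \ref{lem:basecase}, and in particular confirming that the spurious $(1-w)^d$ denominator produced by the binomial expansion cancels against the numerator, is the step I expect to demand the most care.
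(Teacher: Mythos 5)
Your proposal is correct, but it takes a genuinely different route from the paper's own proof. The paper never leaves the Omega calculus: starting from the crude form of Lemma \ref{lem:prop2.1} (with $2d$ Greek variables per diamond), its inductive step eliminates the newest diamond's variables by applying Lemma \ref{lem:basecase} with arguments still containing $\mu_{n,1},\ldots,\mu_{n,d}$, then splits the resulting denominator by the partial-fraction lemma (Lemma \ref{lem:lem21}), recognizes each piece as a shifted generating function $D_{d,n}^{(i)}$ with modified last argument, undoes the shift via Lemma \ref{lem:cor22}, applies the induction hypothesis, and resums with Lemma \ref{lem:lem21} a second time. You bypass all of that machinery: summing the $d$ parallel middle nodes in closed form \emph{before} applying $\OO$ (the correct fix for the obstacle you identify) leaves only one auxiliary variable $\lambda$ for the link inequality $a_{n-1}\geq a_n$; your marked induction hypothesis is legitimate because $q_{n-1}$ occurs only in the final factor of the length-$(n-1)$ formula, and one application of \eqref{eq:MacMahon1} per binomial term then yields, with $X=Q_{n-1}w^{d(n-1)}$ and $P$ the product over the first $n-1$ diamonds,
\[ D_{d,n} \;=\; P\cdot\frac{1}{1-Q_nw^{dn}}\cdot\frac{1}{(1-w)^d}\sum_{i=0}^{d}\frac{(-1)^i\binom{d}{i}w^i}{1-Xw^i}. \]
The step you rightly flag as delicate is exactly the rational-function identity $\frac{1}{(1-w)^d}\sum_{i=0}^{d}\frac{(-1)^i\binom{d}{i}w^i}{1-Xw^i}=\frac{F_d(X,w)}{(1-X)(1-Xw)\cdots(1-Xw^d)}$, and it does hold: running your computation at $n=1$ produces the left-hand side with $X=q_0$ (times $1/(1-q_0q_1w^d)$), so comparing with Lemma \ref{lem:basecase} gives the identity in $(q_0,w)$, after which the substitution $q_0\mapsto X$ is valid; alternatively, one checks that $\frac{1}{(1-w)^d}\sum_{i}(-1)^i\binom{d}{i}w^i\prod_{j\neq i}(1-Xw^j)$ satisfies the recursion defining $F_d$. (One small correction: Lemma \ref{lem:basecase} is proved in the paper by induction on $d$ inside the Omega calculus, not by the direct two-variable summation you describe; but since you only need its statement, this is immaterial.) As for what each route buys: the paper's argument stays structurally parallel to \cite{AndrewsPauleRiese01b} and never needs any closed-form manipulation of $F_d$, while yours is more elementary, dispensing with Lemmas \ref{lem:prop2.1}, \ref{lem:lem21}, and \ref{lem:cor22} entirely; in fact your gap coordinates $e_k=a_{k-1}-a_k$, $c_{k,j}=b_{k,j}-a_k$ prove more than you claim, since they decouple the constraints diamond by diamond and hence yield the full product formula from the single-diamond identity with no induction on $n$ at all.
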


\subsection{The Crude Form of the Generating Function}

Here, we continue to generalize the objects described in \cite{AndrewsPauleRiese01b} as we fix some more notation to describe the crude form of the generating function $D_{d,n}.$ For integers $d, n\geq 1$ we set
\[\Lambda_n \coloneqq \Lambda_{d,n} \coloneqq \left(\la_{n,1}^{a_{n-1}-b_{n,1}} \la_{n,2}^{a_{n-1}-b_{n,2}} \cdots \la_{n,d}^{a_{n-1}-b_{n,d}}\right) \left(\mu_{n,1}^{b_{n,1}-a_n} \mu_{n,2}^{b_{n,2}-a_n} \cdots \mu_{n,d}^{b_{n,d}-a_n}\right).\]
Note here that $\Lambda_{d,1}\Lambda_{d,2}\cdots \Lambda_{d,n}$ encodes all of the inequalities that must be satisfied in a $d$-fold diamond of length $n$. That is, each inequality (i.e., edge of the graph) corresponds to some $\lambda_{i,j}$ or $\mu_{i,j}.$ In other words, we immediately have that
\begin{equation} \label{eq:precrude}
D_{d,n} = \OO \sum_{a_i\geq 0, b_{j,k}\geq 0} \Lambda_{d,1}\Lambda_{d,2}\cdots \Lambda_{d,n} q_{0}^{a_0}\cdots q_{n}^{a_n} w^{\sum_{j,k} b_{j,k}}.
\end{equation}

In the proof of Theorem \ref{thm:mainthm}, we will also need (for integers $d, n\geq 1,$ and $\rho\geq 0$)
\[D_{d,n}^{(\rho)} \coloneqq D_{d,n}^{(\rho)}(q_0, q_1, \ldots, q_n;w) \coloneqq \sum q_{0}^{a_0}\cdots q_{n}^{a_n} w^{\sum_{j,k} b_{j,k}}\]
where the sum for $D_{d,n}^{(\rho)}$ ranges over all non-negative integers $a_i$ and $b_{j,k}$ which satisfy the inequalities encoded by $\Lambda_{d,1}\Lambda_{d,2}\cdots \Lambda_{d,n}$ and where $a_n\geq \rho.$

Now we set (for $d,n\geq 1$ and $1\leq k\leq n$)
\begin{align*}
h\coloneqq h_{d}&\coloneqq \frac{1}{1- \lambda_{1,1} \cdots \lambda_{1,d} \, q_{0}} \\ 
f_{k}\coloneqq f_{k,d} &\coloneqq \frac{1}{\left(1-\frac{\mu_{k,1}}{\lambda_{k,1}}w\right) \cdots \left(1-\frac{\mu_{k,d}}{\lambda_{k,d}}w\right) \cdot \left(1-\frac{\lambda_{k+1,1}\cdots \lambda_{k+1,d}}{\mu_{k,1}\cdots\mu_{k,d}} q_{k}\right)} \\
g_{n}\coloneqq g_{n,d} &\coloneqq \frac{1-\frac{\lambda_{n+1,1}\cdots \lambda_{n+1,d}}{\mu_{n,1}\cdots\mu_{n,d}} q_n}{1-\frac{q_n}{\mu_{n,1}\cdots\mu_{n,d}}}
\end{align*}
and note that one has the following crude form of the generating functions.
\begin{lem} \label{lem:prop2.1}
For $d, n\geq 1,$ we have that
\begin{align*}
D_{d,n} &= \OO h\cdot f_1 \cdots f_n \cdot g_n\\
D_{d,n}^{(\rho)} &= \OO h\cdot f_1 \cdots f_n \cdot g_n \left(\frac{q_n}{\mu_{n,1}\cdots \mu_{n,d}}\right)^\rho.
\end{align*}
\end{lem}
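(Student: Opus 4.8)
The plan is to begin from the pre-$\OO$ expression in \eqref{eq:precrude} and to rewrite the inner multiple sum as a single product of geometric series, one factor for each free summation variable, before the operator $\OO$ is ever applied. The point is that in \eqref{eq:precrude} the indices $a_0,\dots,a_n$ and $b_{j,k}$ run independently over the non-negative integers (the inequalities of Figure~\ref{fig:dfoldlengthn} are imposed only later, by $\OO$), so the monomial $\Lambda_{d,1}\cdots\Lambda_{d,n}\,q_0^{a_0}\cdots q_n^{a_n}w^{\sum_{j,k} b_{j,k}}$ is a product of powers and the whole sum factors completely. Thus the real work is just bookkeeping: collect the total exponent of each variable and sum the resulting geometric series.

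First I would record, for each summation index, the monomial it contributes. Reading the exponents off $\Lambda_{d,k}$, the variable $b_{k,j}$ occurs only in $w^{b_{k,j}}$, in $\la_{k,j}^{-b_{k,j}}$, and in $\mu_{k,j}^{b_{k,j}}$, so its contribution is $(\tfrac{\mu_{k,j}}{\la_{k,j}}w)^{b_{k,j}}$ and summing over $b_{k,j}\ge 0$ produces the factor $1/(1-\tfrac{\mu_{k,j}}{\la_{k,j}}w)$. Likewise $a_0$ occurs in $q_0^{a_0}$ and in each $\la_{1,j}^{a_0}$, giving $1/(1-\la_{1,1}\cdots\la_{1,d}q_0)=h$; and for $1\le k\le n-1$ the interior vertex $a_k$ occurs in $q_k^{a_k}$, in each $\la_{k+1,j}^{a_k}$, and in each $\mu_{k,j}^{-a_k}$, giving $1/(1-\tfrac{\la_{k+1,1}\cdots\la_{k+1,d}}{\mu_{k,1}\cdots\mu_{k,d}}q_k)$. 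Grouping the $d$ factors coming from $b_{k,1},\dots,b_{k,d}$ together with this last factor recovers exactly $f_k$, and $a_0$ recovers $h$. Hence $h$, the factors $f_1,\dots,f_{n-1}$, and the $w$-factors of $f_n$ account for every summation variable except $a_n$.

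The one genuinely delicate point is the right-hand boundary. Since the diamond of Figure~\ref{fig:dfoldlengthn} stops at $a_n$, this vertex is not followed by any $\la_{n+1,j}$, so $a_n$ occurs only in $q_n^{a_n}$ and in each $\mu_{n,j}^{-a_n}$; summing gives the correct terminal factor $1/(1-\tfrac{q_n}{\mu_{n,1}\cdots\mu_{n,d}})$. However, $f_n$ as defined carries the extra ``generic-link'' factor $1/(1-\tfrac{\la_{n+1,1}\cdots\la_{n+1,d}}{\mu_{n,1}\cdots\mu_{n,d}}q_n)$ involving the virtual variables $\la_{n+1,j}$ that do not actually appear in $\Lambda_{d,1}\cdots\Lambda_{d,n}$. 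This is precisely what $g_n$ is built to repair: multiplying by $g_n$ (read with $q_{n+1}$ in the role of $q_n$) contributes the ratio $\bigl(1-\tfrac{\la_{n+1,1}\cdots\la_{n+1,d}}{\mu_{n,1}\cdots\mu_{n,d}}q_n\bigr)/\bigl(1-\tfrac{q_n}{\mu_{n,1}\cdots\mu_{n,d}}\bigr)$, which cancels the spurious factor of $f_n$, removes all dependence on the virtual $\la_{n+1,j}$, and leaves exactly the terminal factor computed above. I expect verifying this cancellation --- and being careful that the product $f_n\,g_n$ is genuinely free of the auxiliary variables --- to be the main obstacle, since everything else is a routine geometric-series computation. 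Putting the factors together shows that the summand of \eqref{eq:precrude} equals $h\,f_1\cdots f_n\,g_n$, and applying $\OO$ to both sides gives the first formula.

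For the second formula, the only change is that $D_{d,n}^{(\rho)}$ imposes $q_n\ge\rho$, i.e.\ the index $a_n$ is summed over $a_n\ge\rho$ rather than $a_n\ge 0$. The terminal geometric series then begins at $a_n=\rho$, which simply pulls out the prefactor $\bigl(\tfrac{q_n}{\mu_{n,1}\cdots\mu_{n,d}}\bigr)^{\rho}$ while leaving every other factor unchanged; applying $\OO$ yields the stated expression for $D_{d,n}^{(\rho)}$.
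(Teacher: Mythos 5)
Your proof is correct and is essentially the paper's own argument: the paper simply defers to Propositions 2.1 and 2.2 of Andrews--Paule--Riese, whose proofs are exactly this variable-by-variable geometric-series expansion of \eqref{eq:precrude}, with $h$, $f_1,\dots,f_{n-1}$ and the $w$-factors of $f_n$ absorbing all indices except $a_n$, the product $f_n g_n$ cancelling the spurious $\la_{n+1,j}$ link factor, and the series for $a_n\geq\rho$ producing the prefactor $\left(\frac{q_n}{\mu_{n,1}\cdots\mu_{n,d}}\right)^{\rho}$. You are also right to flag the boundary factor: the paper's displayed definition of $g_n$ carries an index typo ($q_{n+1}$ should read $q_n$, as its subsequent use in the proof of Theorem \ref{thm:mainthm} confirms), and your corrected reading is the intended one.
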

\begin{proof} The proof follows, mutatis mutandis, as in the proofs of Propositions 2.1 and 2.2 in \cite{AndrewsPauleRiese01b}.
\end{proof}

\subsection{Preliminary Results}
\label{sec:prelimresults}

In order to prove Theorem \ref{thm:mainthm}, we must be able to apply some elimination formulae in order to simplify the expression for $D_{d,n}$ given in Lemma \ref{lem:prop2.1}. We'll first do this in the case where $n=1.$

\begin{lem} \label{lem:basecase}
For $d\geq 1,$ we have that
\[D_{d,1}(q_0, q_1; w) = \frac{F_d(q_0,w)}{(1-q_0)(1-q_0w)\cdots (1-q_0w^d)\cdot (1-q_0q_1w^d)},\]
where $F_d(q_0,w)\in \ZZ[q_0,w]$ is a polynomial of degree $d-1$ in $q_0$ that is given by $F_1(q_0,w)=1$ and \[F_d(q_0,w) = \frac{(1-q_0w^d)F_{d-1}(q_0,w) - w(1-q_0)F_{d-1}(q_0w,w)}{1-w}.\]
\end{lem}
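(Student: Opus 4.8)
\emph{Approach.} The plan is to start from the crude form of $D_{d,1}$ and eliminate all of the Greek variables in two stages, first the $\mu$'s and then the $\lambda$'s, isolating a self-similar recursion that matches the defining recursion for $F_d$. Specializing Lemma~\ref{lem:prop2.1} to $n=1$ and terminating the chain (setting $q_2=q_1$, so that the spurious factors involving $\lambda_{2,j}$ cancel between $f_1$ and $g_1$; equivalently, by summing the geometric series in the defining sum directly), we obtain
\[D_{d,1} = \OO\,\frac{1}{(1-q_0\lambda_1\cdots\lambda_d)\prod_{j=1}^d\bigl(1-\tfrac{\mu_j}{\lambda_j}w\bigr)\bigl(1-\tfrac{q_1}{\mu_1\cdots\mu_d}\bigr)},\]
where I abbreviate $\lambda_j:=\lambda_{1,j}$ and $\mu_j:=\mu_{1,j}$. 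Each $\mu_j$ occurs in exactly one factor with a positive power and, together with the others, in the single coupling factor with a negative power; so eliminating $\mu_1,\dots,\mu_d$ one at a time is a direct iteration of \eqref{eq:MacMahon1}. This collapses the $\mu$-dependence and leaves
\[D_{d,1}=\OO\,\frac{1}{(1-q_0L)\prod_{j=1}^d(1-w\lambda_j^{-1})\,(1-q_1w^dL^{-1})},\qquad L:=\lambda_1\cdots\lambda_d.\]

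Next I eliminate the $\lambda$'s. The first elimination is the crucial one: $\lambda_1$ appears once with a positive power (in $1-q_0L$) and twice with negative powers (in $1-w\lambda_1^{-1}$ and in $1-q_1w^dL^{-1}$), so I apply the transpose of Proposition~\ref{prop:MacMahon} for one positive and two negative factors — a one-line consequence of expanding $(1-\lambda^{-1}b_1)^{-1}(1-\lambda^{-1}b_2)^{-1}$ in complete homogeneous symmetric functions, giving $\OO\,[(1-\lambda a)(1-\lambda^{-1}b_1)(1-\lambda^{-1}b_2)]^{-1}=[(1-a)(1-ab_1)(1-ab_2)]^{-1}$. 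With $a=q_0\lambda_2\cdots\lambda_d$, $b_1=w$, and $b_2=q_1w^d(\lambda_2\cdots\lambda_d)^{-1}$, the product $ab_2=q_0q_1w^d$ is free of the remaining $\lambda$'s, so the tail $(1-q_0q_1w^d)^{-1}$ factors out exactly and the rest reduces to $R_{d-1}(q_0)$, where
\[R_r(s):=\OO\,\frac{1}{(1-sN)(1-swN)\prod_{i=1}^r(1-w\nu_i^{-1})},\qquad N:=\nu_1\cdots\nu_r,\quad R_0(s)=\tfrac{1}{(1-s)(1-sw)}.\]
In $R_r$ each remaining variable now appears with a positive power in the two coupling factors and a negative power in exactly one factor, so Proposition~\ref{prop:MacMahon} (with $d=2$, $j=0$, $y=w$) applies verbatim to one variable at a time and yields the clean recursion
\[R_r(s)=\frac{R_{r-1}(s)-w\,R_{r-1}(sw)}{1-w},\]
the two summands being precisely the two terms produced by the elimination formula, the second carrying the shift $s\mapsto sw$.

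Finally I convert this into the stated recursion for $F_d$. Writing $R_r(s)=\Phi_r(s,w)\big/\prod_{i=0}^{r+1}(1-sw^i)$ and clearing denominators in the recursion for $R_r$ gives $\Phi_r(s)=\bigl[(1-sw^{r+1})\Phi_{r-1}(s)-w(1-s)\Phi_{r-1}(sw)\bigr]/(1-w)$; setting $F_d=\Phi_{d-1}$ and tracing $D_{d,1}=R_{d-1}(q_0)/(1-q_0q_1w^d)$ reproduces both the claimed closed form and the recursion for $F_d$, with base case $F_1=\Phi_0=1$ from $R_0$. It then remains to verify the three structural claims: that $1-w$ divides the numerator (evaluate it at $w=1$, where the two terms cancel, so $1-w$ is a factor and the quotient lies in $\ZZ[q_0,w]$ by Gauss's lemma), that the coefficients are integral (immediate once divisibility is known), and that $\deg_{q_0}F_d=d-1$ (induction: the top-degree terms combine to $c_{d-2}w^{d-1}(1-w)q_0^{d-1}$, whose quotient by $1-w$ is nonzero). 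The main obstacle is organizing the $\lambda$-eliminations so that the recursion emerges cleanly — in particular handling the first, asymmetric elimination that detaches the tail $(1-q_0q_1w^d)^{-1}$ and recognizing all remaining steps as a single template of Proposition~\ref{prop:MacMahon}; by comparison the divisibility and degree bookkeeping is routine.
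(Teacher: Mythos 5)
Your proof is correct, but it takes a genuinely different route from the paper's. The paper proves the lemma by induction on $d$ at the level of $D_{d,1}$ itself: it sets aside the pair $(\lambda_d,\mu_d)$, recognizes the remaining crude form as $D_{d-1,1}(\lambda_d q_0,\mu_d^{-1}q_1;w)$, substitutes the inductive closed form, expands $F_{d-1}(\lambda_d q_0,w)=\sum_i a_i(w)(\lambda_d q_0)^i$, and then eliminates $\mu_d$ by \eqref{eq:MacMahon1} and $\lambda_d$ by Proposition \ref{prop:MacMahon} with exponent $j=i$ --- the general-$j$ case of that proposition is essential there precisely because of the monomial expansion. You instead eliminate all of the $\mu_j$'s first by iterating \eqref{eq:MacMahon1}, then remove $\lambda_1$ with the dual identity $\OO\,\bigl[(1-\lambda a)(1-\lambda^{-1}b_1)(1-\lambda^{-1}b_2)\bigr]^{-1}=\bigl[(1-a)(1-ab_1)(1-ab_2)\bigr]^{-1}$, which is not stated in the paper but which your geometric-series derivation correctly establishes; this cleanly detaches the factor $(1-q_0q_1w^d)^{-1}$, and the remaining eliminations form a recursion for your auxiliary function $R_r$ using only the $d=2$, $j=0$ case of Proposition \ref{prop:MacMahon}. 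The trade-off: your route never expands $F_{d-1}$ into monomials and needs only the simplest elimination formulas (at the cost of proving one extra identity), and the recursion for $F_d$ emerges from an auxiliary object rather than from the lemma's own statement; the paper's route exploits the self-similarity of the diamond directly (its induction hypothesis is the lemma for $d-1$), uses only the two elimination formulas it has already recorded, and mirrors the induction-on-$n$ template reused later in the proof of Theorem \ref{thm:mainthm}. Two smaller points in your favor: your handling of Lemma \ref{lem:prop2.1} at $n=1$ (cancelling the spurious $\lambda_{2,j}$ factors between $f_1$ and $g_1$, or equivalently summing the defining series directly) lands on the correct crude form, and you actually verify integrality and $\deg_{q_0}F_d=d-1$, which the paper's proof asserts but only supports by checking that $w=1$ is a root of the numerator.
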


\begin{proof}
Proceed by induction on $d$. The case $d=1$ corresponds to classical partitions with at most three parts. We may compute the generating function for such partitions using MacMahon's partition analysis: we have 
\begin{align*} 
D_{1,1} (q_{0},q_{1};w) &= \OO \sum\limits_{a_{0},a_{1},b_{1} \geq 0} \lambda_{1}^{a_{0}-b_{1}} \mu_{1}^{b_{1}-a_{1}} q_{0}^{a_{0}} w^{b_{1}} q_{1}^{a_{1}} = \OO \sum\limits_{a_{0},a_{1},b_{1} \geq 0} (\lambda_{1}q_{0})^{a_{0}} \left(\frac{\mu_{1}}{\lambda_{1}} w\right)^{b_{1}} \left(\frac{q_{1}}{\mu_{1}}\right)^{a_{1}} \\
&= \OO \frac{1}{(1-\lambda_{1}q_{0})(1-\lambda_{1}^{-1}\mu_{1} w)(1-\mu_{1}^{-1} q_{1})}  = \OO \frac{1}{(1-q_{0})(1-\mu_{1}q_{0}w)(1-\mu_{1}^{-1}q_{1})} \\
&= \frac{1}{(1-q_{0})(1-q_{0}w)(1-q_{0}q_{1}w)},
\end{align*}
by applying \eqref{eq:MacMahon1} to successively eliminate $\lambda_{1}$ and $\mu_{1}.$ Hence, $F_{1}=1.$

Now suppose that the conclusion holds for $d-1.$ Then by Lemma \ref{lem:prop2.1} (and the induction hypothesis) we have
\begin{align*}
D_{d,1} (q_{0},q_{1};w) &= \OO \frac{1}{(1- \la_1 \cdots \la_d \, q_{0}) \cdot \left(1-\la_1^{-1}\mu_1w\right) \cdots \left(1-\la_d^{-1}\mu_dw\right) \cdot \left(1-\mu_1^{-1}\cdots\mu_d^{-1} q_1\right)}\\
&= \OO \frac{D_{d-1,1} (\lambda_{d}q_{0}, \mu_{d}^{-1} q_{1}; w)}{1-\lambda_{d}^{-1}\mu_{d}w}\\
&= \OO \frac{F_{d-1}(\lambda_{d}q_{0},w)}{(1-\lambda_{d}q_{0})\cdots(1-\lambda_{d}q_{0}w^{d-1})(1-\lambda_{d}\mu_d^{-1}q_{0}q_{1}w^{d-1})(1-\lambda_{d}^{-1}\mu_{d}w)}.
\end{align*}
Write $F_{d-1}$ as a polynomial in $q_{0}$ with coefficients in $\mathbb{Z}[w]$ as
\[ F_{d-1}(q_0,w) = \sum\limits_{i=0}^{d-2} a_{i}(w) q_{0}^{i}. \]
Then 
\begin{align*} 
D_{d,1}  (q_{0},q_{1};w) 
&= \OO \frac{\sum\limits_{i=0}^{d-2} a_{i}(w) (\lambda_{d}q_{0})^{i}}{(1-\lambda_{d}q_{0})\cdots(1-\lambda_{d}q_{0}w^{d-1})(1-\frac{\lambda_{d}}{\mu_{d}}q_{0}q_{1}w^{d-1})(1-\frac{\mu_{d}}{\lambda_{d}}w)} \\
&= \sum\limits_{i=0}^{d-2} a_{i}(w) q_{0}^{i} \OO \frac{\lambda_{d}^{i}}{(1-\lambda_{d}q_{0})\cdots(1-\lambda_{d}q_{0}w^{d-1})(1-\frac{\lambda_{d}}{\mu_{d}}q_{0}q_{1}w^{d-1})(1-\frac{\mu_{d}}{\lambda_{d}}w)} \\
&= \sum\limits_{i=0}^{d-2} a_{i}(w) q_{0}^{i} \OO \frac{\lambda_{d}^{i}}{(1-\lambda_{d}q_{0})\cdots(1-\lambda_{d}q_{0}w^{d-1})(1-\lambda_{d}^{-1}w)(1-q_{0}q_{1}w^{d})},
\end{align*}
by \eqref{eq:MacMahon1}. Rearranging, 
\begin{align*}
D_{d,1} (q_{0},q_{1};w) &= \frac{\sum\limits_{i=0}^{d-2} a_{i}(w) q_{0}^{i}}{1-q_{0}q_{1}w^{d}} \OO \frac{\lambda_{d}^{i}}{(1-\lambda_{d}q_{0})\cdots(1-\lambda_{d}q_{0}w^{d-1})(1-\lambda_{d}^{-1}w)} \\
&= \frac{\sum\limits_{i=0}^{d-2} a_{i}(w) q_{0}^{i}}{(1-q_{0}q_{1}w^{d})(1-w)} \left[ \frac{1}{(1-q_{0})\cdots(1-q_{0}w^{d-1})} - \frac{w^{i+1}}{(1-q_{0}w)\cdots(1-q_{0}w^{d})} \right] \\
&= \frac{1}{(1-q_{0})\cdots(1-q_{0}w^{d})(1-q_{0}q_{1}w^{d})} \sum\limits_{i=0}^{d-2} a_{i}(w) q_{0}^{i} \left[\frac{1-q_{0}w^{d}}{1-w} - \frac{w^{i+1}(1-q_{0})}{1-w} \right] \\
\end{align*}
by applying Proposition \ref{prop:MacMahon}. Thus, we have shown that 
\begin{align*}
F_{d}(q_{0},w) &= \sum\limits_{i=0}^{d-2} a_{i}(w) q_{0}^{i} \left[\frac{1-q_{0}w^{d}}{1-w} - \frac{w^{i+1}(1-q_{0})}{1-w} \right] \\
&= \frac{1-q_{0}w^{d}}{1-w} \sum\limits_{i=0}^{d-2} a_{i}(w)q_{0}^{i} - \frac{w}{1-w}(1-q_{0}) \sum\limits_{i=0}^{d-2} a_{i} (w) q_{0}^{i} w^{i} \\
&= \frac{(1-q_{0}w^{d}) F_{d-1}(q_{0},w) - w(1-q_{0})F_{d-1}(q_{0}w,w)}{1-w}.
\end{align*}
To complete the proof, note that $F_d(q_0,w)$ is indeed a polynomial since $w=1$ is a root of the numerator given above.
\end{proof}

Lemma \ref{lem:basecase} will serve as the base case of an induction argument in the proof of Theorem \ref{thm:mainthm}. We will also need the following results in order to complete that proof.

\begin{lem} \label{lem:lem21}
For $k\geq1,$ and $y_1, \ldots, y_k, z\neq 0$ distinct elements from an appropriate field, define
\[p(\mathbf{y};z)\coloneqq \prod_{i=1}^k\left(1-\frac{y_i}{z}\right),\]
where $\mathbf{y} = (y_1, \ldots, y_k).$ For $1\leq j\leq k$ define
\[p_j(\mathbf{y};z) \coloneqq \prod_{\substack{i=1\\i\neq j}}^k\left(1-\frac{y_i}{z}\right)^{-1}.\]
Then we have \[\frac{1}{p(\mathbf{y};z)} = \sum_{j=1}^k\frac{p_j(\mathbf{y};y_j)}{1-(y_j/z)}.\]
\end{lem}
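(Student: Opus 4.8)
The plan is to recognize this identity as an ordinary partial-fraction expansion in the variable $z$, disguised only by the $1-y_i/z$ normalization. First I would clear the $z$'s out of each factor by writing $1-y_i/z = (z-y_i)/z$, so that
\[
\frac{1}{p(\mathbf{y};z)} = \frac{z^k}{\prod_{i=1}^k (z-y_i)}.
\]
This recasts the left-hand side as a rational function of $z$ whose numerator and denominator both have degree $k$.

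Next I would peel off one factor of $z$ and expand the resulting proper part. Since $y_1,\dots,y_k$ are distinct and nonzero, the function $z^{k-1}/\prod_{i=1}^k(z-y_i)$ has numerator degree $k-1$ strictly below the denominator degree $k$, so it admits a partial-fraction decomposition with no polynomial part,
\[
\frac{z^{k-1}}{\prod_{i=1}^k(z-y_i)} = \sum_{j=1}^k \frac{r_j}{z-y_j},
\qquad
r_j = \lim_{z\to y_j}\frac{(z-y_j)\,z^{k-1}}{\prod_{i=1}^k(z-y_i)} = \frac{y_j^{k-1}}{\prod_{i\neq j}(y_j-y_i)}.
\]
Multiplying through by $z$ then gives
\[
\frac{z^k}{\prod_{i=1}^k(z-y_i)} = \sum_{j=1}^k \frac{y_j^{k-1}}{\prod_{i\neq j}(y_j-y_i)}\cdot\frac{z}{z-y_j}.
\]

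Finally I would translate each summand back into the paper's normalization. We have $z/(z-y_j) = (1-y_j/z)^{-1}$, while
\[
\frac{y_j^{k-1}}{\prod_{i\neq j}(y_j-y_i)} = \prod_{i\neq j}\frac{y_j}{y_j-y_i} = \prod_{i\neq j}\Bigl(1-\frac{y_i}{y_j}\Bigr)^{-1} = p_j(\mathbf{y};y_j),
\]
which is exactly the quantity appearing in the statement. Substituting these two identities yields precisely $\sum_{j=1}^k p_j(\mathbf{y};y_j)/(1-y_j/z)$, completing the proof.

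The only real subtlety — and the step I would be most careful about — is the degree bookkeeping: the decomposition must be applied to $z^{k-1}/\prod(z-y_i)$ rather than to $z^k/\prod(z-y_i)$ directly, since the latter has equal numerator and denominator degrees and therefore carries an extra constant (polynomial) term. Factoring out the single power of $z$ first removes that term and makes the residues line up exactly with the $p_j(\mathbf{y};y_j)$. As a sanity check one can instead clear denominators and verify the resulting degree-$k$ polynomial identity at the $k$ distinct points $z=y_j$ together with the behavior as $z\to\infty$; the partial-fraction route above is just the streamlined version of this. The hypotheses that the $y_i$ and $z$ be distinct and nonzero are precisely what guarantee the simple poles and the validity of the residue computation.
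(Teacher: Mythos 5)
Your proof is correct, but it does not parallel the paper's own treatment, because the paper gives no argument at all for this lemma: it simply cites Lemma~2.1 of Andrews--Paule--Riese \cite{AndrewsPauleRiese01b}. Your self-contained partial-fractions argument is sound at every step: rewriting $1/p(\mathbf{y};z)=z^k/\prod_{i=1}^k(z-y_i)$, peeling off one power of $z$ so that $z^{k-1}/\prod_{i=1}^k(z-y_i)$ is a \emph{proper} rational function with simple poles at the distinct points $y_j$, computing the residues $r_j=y_j^{k-1}/\prod_{i\neq j}(y_j-y_i)$, and then using $y_j\neq 0$ to identify $r_j=p_j(\mathbf{y};y_j)$ and $z/(z-y_j)=(1-y_j/z)^{-1}$. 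The degree bookkeeping you flag is indeed the one place a careless version would go wrong, and you handle it correctly; an equivalent alternative is to expand $z^k/\prod_i(z-y_i)$ directly, accept the constant polynomial part $1$, and observe that it gets absorbed when one recombines, but your route is cleaner. Two minor polish points: since the statement is over an abstract field, the residue computation should be phrased algebraically (cancel the factor $z-y_j$ and evaluate at $z=y_j$, Lagrange-interpolation style) rather than as a limit; and it is worth saying explicitly that the hypothesis $z\neq y_j$ (distinctness of $z$ from the $y_i$) is what makes each term $1/(1-y_j/z)$ meaningful. In substance your argument is the classical proof that the cited source leaves implicit, so it usefully fills in what this paper delegates to a reference.
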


\begin{proof}
See \cite[Lemma 2.1]{AndrewsPauleRiese01b}.
\end{proof}

\begin{lem} \label{lem:cor22}
Let $d,n\geq 1$ and $\rho\geq 0.$ Then
\[D_{d,n}^{(\rho)}(q_0,\ldots,q_n;w) = (q_0\cdots q_n)^\rho w^{dn\rho}D_{d,n}(q_0,\ldots,q_n;w).\]
\end{lem}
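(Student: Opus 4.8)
\emph{Proof proposal.} The plan is to establish the identity combinatorially, via a direct shift (or ``translation'') on the underlying diamonds. Recall that $D_{d,n}$ is the generating function for the $d$-fold partition diamonds of length $n$, each of which consists of non-negative integers $a_0,a_1,\ldots,a_n$ together with $b_{j,k}$ (for $1\leq j\leq n$ and $1\leq k\leq d$) subject to the inequalities encoded by Figure \ref{fig:dfoldlengthn}; and $D_{d,n}^{(\rho)}$ is the analogous generating function carrying the extra requirement $a_n\geq \rho$. The key structural observation I would record first is that, since every directed edge points rightward and encodes a $\geq$, one has
\[a_0 \geq b_{1,k} \geq a_1 \geq b_{2,k} \geq \cdots \geq b_{n,k} \geq a_n\]
for every $k$, so that $a_n$ is the \emph{smallest} part appearing in any length-$n$ diamond. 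Consequently the condition $a_n\geq \rho$ is equivalent to requiring that \emph{all} parts of the diamond be at least $\rho$.

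Next I would set up the bijection
\[\Phi : \{\text{length-}n\text{ diamonds with } a_n \geq \rho\} \longrightarrow \{\text{all length-}n\text{ diamonds}\}\]
obtained by subtracting $\rho$ from every part, i.e.\ $a_i \mapsto a_i - \rho$ and $b_{j,k}\mapsto b_{j,k}-\rho$. Because the same constant is subtracted from both sides of each inequality, all the defining inequalities are preserved; and because $a_n$ is minimal with $a_n\geq\rho$, every shifted part remains a non-negative integer. The inverse map simply adds $\rho$ back to every part, so $\Phi$ is indeed a bijection between the two families of diamonds.

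Finally I would track the effect of $\Phi$ on the monomial weights. A length-$n$ diamond contains exactly $n+1$ parts of type $a$ (namely $a_0,\ldots,a_n$) and exactly $dn$ parts of type $b$ (namely $b_{j,k}$ for $1\leq j\leq n$, $1\leq k\leq d$). Subtracting $\rho$ from each $a_i$ divides the factor $q_0^{a_0}\cdots q_n^{a_n}$ by $(q_0\cdots q_n)^\rho$, while subtracting $\rho$ from each of the $dn$ parts $b_{j,k}$ lowers the exponent $\sum_{j,k}b_{j,k}$ by $dn\rho$, dividing the factor $w^{\sum_{j,k}b_{j,k}}$ by $w^{dn\rho}$. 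Summing over all diamonds counted by $D_{d,n}^{(\rho)}$ and re-indexing through $\Phi$ then gives
\[D_{d,n}^{(\rho)} = (q_0\cdots q_n)^\rho\, w^{dn\rho}\, D_{d,n},\]
as claimed.

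As for the main obstacle: there is essentially no analytic difficulty here, and the argument is short. The only points demanding care are the two pieces of bookkeeping—confirming that $a_n$ is genuinely the minimal part (so that the uniform shift keeps every part non-negative and hence lands in the correct family), and correctly counting the $dn$ many $b$-parts, since it is precisely this count that produces the exponent $dn\rho$ on $w$ rather than on the $q_i$.
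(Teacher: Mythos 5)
Your proof is correct and follows essentially the same route as the paper's: the paper proves the lemma via exactly this bijection of adding/subtracting $\rho$ to every part, matching the set of all length-$n$ diamonds with the set of those having smallest part at least $\rho$. You are in fact slightly more careful than the paper, since you explicitly verify that $a_n$ is the minimal part (so the condition $a_n\geq\rho$ is equivalent to all parts being at least $\rho$) and you make the weight bookkeeping for the $n+1$ parts $a_i$ and the $dn$ parts $b_{j,k}$ explicit, details the paper leaves implicit.
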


\begin{proof}
The above equality is given by the following bijection. Given an arbitrary $d$-fold diamond partition, adding $\rho$ to each part 
assigns the partition a unique $d$-fold diamond partition with smallest part at least $\rho$. Conversely, given an arbitrary $d$-fold diamond partition with smallest part at least $\rho$, subtracting $\rho$ from each part assigns the partition a unique $d$-fold diamond partition. Thus set of $d$-fold diamond partitions and the set of $d$-fold diamond partitions with smallest part at least $\rho$ are in bijection, which is encoded by the above equality on their generating functions; this proves the result.
\end{proof}

\subsection{Proof of Theorem \ref{thm:mainthm}}
\label{sec:mainthm}

To prove Theorem \ref{thm:mainthm}, we follow the same approach as in the proof of Theorem 2.1 of \cite{AndrewsPauleRiese01b}.

\begin{proof} We proceed by induction on $n.$ The base case (for $n=1$) has already been discussed in Lemma \ref{lem:basecase}. Now, suppose the theorem holds for $n$ and note that by Lemma \ref{lem:prop2.1} we have

\begin{align*}
D_{n+1} = &D_{d,n+1}(q_0,\ldots,q_{n+1};w) = \OO h\cdot f_1\cdots f_{n+1} \cdot g_{n+1}\\
= &\OO h\cdot f_1\cdots f_{n-1} \frac{1}{\left(1-\frac{\mu_{n,1}}{\lambda_{n,1}}w\right) \cdots \left(1-\frac{\mu_{n,d}}{\lambda_{n,d}}w\right)} \cdot \frac{1}{\left(1-\frac{\lambda_{n+1,1}\cdots \lambda_{n+1,d}}{\mu_{n,1}\cdots\mu_{n,d}} q_{n}\right)}\\
&\cdot \frac{1}{\left(1-\frac{\mu_{n+1,1}}{\lambda_{n+1,1}}w\right) \cdots \left(1-\frac{\mu_{n+1,d}}{\lambda_{n+1,d}}w\right)} \cdot \frac{1}{\left(1-\frac{1}{\mu_{n+1,1}\cdots\mu_{n+1,d}} q_{n+1}\right)}.
\end{align*}

Now, we apply Lemma \ref{lem:basecase} to the last $d+2$ factors to eliminate $\la_{n+1,1}, \ldots \la_{n+1,d}$ and $\mu_{n+1,1}, \ldots \mu_{n+1,d}$ and find that
\begin{align*}
D_{n+1} = &\OO h\cdot f_1\cdots f_{n-1} \frac{1}{\left(1-\frac{\mu_{n,1}}{\lambda_{n,1}}w\right) \cdots \left(1-\frac{\mu_{n,d}}{\lambda_{n,d}}w\right)} D_1\left(\frac{q_n}{\mu_{n,1}\dots \mu_{n,d}}, q_{n+1};w\right)\\
= &\OO h\cdot f_1\cdots f_{n-1} \frac{1}{\left(1-\frac{\mu_{n,1}}{\lambda_{n,1}}w\right) \cdots \left(1-\frac{\mu_{n,d}}{\lambda_{n,d}}w\right)} \cdot \frac{F_d\left(\frac{q_n}{\mu_{n,1}\cdots \mu_{n,d}},w\right)}{p(\mathbf{y}; \mu_{n,1}\cdots \mu_{n,d})},
\end{align*}
where we set $\mathbf{y} = (y_1, \ldots, y_{d+2}) \coloneqq (q_n,q_nw,\dots, q_nw^d,q_nq_{n+1}w^d).$ Then, applying Lemma \ref{lem:lem21} and substituting $F_d\left(\frac{q_n}{\mu_{n,1}\cdots \mu_{n,d}}, w\right) = \sum_{i=0}^{d-1}a_i(w) \frac{q_n^i}{(\mu_{n,1}\cdots \mu_{n,d})^i}$ (where $a_i(w)\in \ZZ[w]$ depends on $d$) gives
\begin{align*}
D_{n+1} &= \OO h\cdot f_1\cdots f_{n-1} \frac{F_d\left(\frac{q_n}{\mu_{n,1}\cdots \mu_{n,d}},w\right)}{\left(1-\frac{\mu_{n,1}}{\lambda_{n,1}}w\right) \cdots \left(1-\frac{\mu_{n,d}}{\lambda_{n,d}}w\right)} \cdot \sum_{j=1}^{d+2} \frac{p_j(\mathbf{y}; y_j)}{1-\frac{y_j}{\mu_{n,1}\cdots \mu_{n,d}}}\\
&= \sum_{i=0}^{d-1} a_i(w) q_n^i \sum_{j=1}^{d+2} p_j(\mathbf{y}; y_j) \frac{1}{y_j^i}\left[\OO \frac{h\cdot f_1\cdots f_{n-1}}{\left(1-\frac{\mu_{n,1}}{\lambda_{n,1}}w\right) \cdots \left(1-\frac{\mu_{n,d}}{\lambda_{n,d}}w\right)} \cdot \frac{1}{1-\frac{y_j}{\mu_{n,1}\cdots \mu_{n,d}}} \cdot \left(\frac{y_j}{\mu_{n,1}\cdots \mu_{n,d}}\right)^i \right]\\
&= \sum_{i=0}^{d-1} a_i(w) q_n^i \sum_{j=1}^{d+2} p_j(\mathbf{y}; y_j) \frac{1}{y_j^i} D_n^{(i)}(q_0, q_1, \dots, q_{n-1}, y_j; w)\\
&= \sum_{i=0}^{d-1} a_i(w) Q_n^i w^{din} \sum_{j=1}^{d+2} p_j(\mathbf{y}; y_j) D_n(q_0, q_1, \dots, q_{n-1}, y_j; w)\\
&= F_d(Q_nw^{dn},w) \sum_{j=1}^{d+2} p_j(\mathbf{y}; y_j) D_n(q_0, q_1, \dots, q_{n-1}, y_j; w),
\end{align*}
where we have used Lemma \ref{lem:prop2.1} and Lemma \ref{lem:cor22}. By the induction hypothesis, we may observe that
\[D_n(q_0, q_1, \dots, q_{n-1}, y_j; w) = \frac{1-Q_nw^{dn}}{1-Q_{n-1}y_jw^{dn}}D_n(q_0, q_1, \dots, q_{n-1}, q_n; w),\]
and thus we have
\begin{align*}
D_{n+1} =& F_d(Q_nw^{dn},w) (1-Q_nw^{dn}) D_n(q_0, q_1, \dots, q_{n-1}, q_n; w) \sum_{j=1}^{d+2} \frac{p_j(\mathbf{y}; y_j)}{1-(q_0\dots q_{n-1})y_jw^{dn}}\\
=& F_d(Q_nw^{dn},w) (1-Q_nw^{dn}) D_n(q_0, q_1, \dots, q_{n-1}, q_n; w) \frac{1}{p(\mathbf{y}; \frac{q_n}{Q_nw^{dn}})}\\
=& F_d(Q_nw^{dn},w) (1-Q_nw^{dn}) D_n(q_0, q_1, \dots, q_{n-1}, q_n; w)\\
&\cdot \frac{1}{(1-Q_nw^{dn})(1-Q_nw^{dn+1})\cdots (1-Q_nw^{dn+d})(1-Q_{n+1}w^{dn+d})}
\end{align*}
by Lemma \ref{lem:lem21} with $z=(Q_{n-1}w^{dn})^{-1}.$ Thus we have shown that
\[D_{n+1} = \frac{F_d(Q_nw^{dn},w)}{(1-Q_nw^{dn+1})\cdots (1-Q_nw^{dn+d})(1-Q_{n+1}w^{dn+d})} D_n,\]
which gives the desired result.
\end{proof}

\subsection{Proof of Theorems \ref{thm:genfcnforall} and \ref{thm:genfcnforlinks}} \label{subsec:proofof1.1and1.2}

Now, Theorems \ref{thm:genfcnforall} and \ref{thm:genfcnforlinks} can be obtained as corollaries of Theorem \ref{thm:mainthm}.
\begin{proof}[Proof of Theorems \ref{thm:genfcnforall} and \ref{thm:genfcnforlinks}]
Theorem \ref{thm:genfcnforall} comes from applying Theorem \ref{thm:mainthm} to
\[\sum_{n=0}^\infty r_d(n) q^n= \lim_{n\rightarrow\infty} D_{d,n}(q,q,\ldots, q;q).\] To prove Theorem \ref{thm:genfcnforlinks}, we first apply Theorem \ref{thm:mainthm} to obtain
\[\sum_{n=0}^\infty s_d(n) q^n= \lim_{n\rightarrow\infty} D_{d,n}(q,q,\ldots, q;1) = \prod_{n=1}^\infty \frac{F_{d}(q^n,1)}{(1-q^{n})^{d+1}},\]
so we must show that $F_d(q,1)=A_d(q).$ Since $F_1(q,1) = A_1(q) = 1,$ it suffices to show that $F_d(q,1)$ and $A_d(q)$ satisfy the same recurrence \eqref{eulerian_poly_recur}.

Recalling the recurrence from Lemma \ref{lem:basecase} and writing $F_{d-1}(q,w)=\sum_{i=0}^{d-2}a_i(w)q^i$ gives 

\begin{align*}
F_d(q,w)(1-w) &= (1-qw^d)F_{d-1}(q,w) - w(1-q)F_{d-1}(qw,w)\\
&= (1-qw^d)\sum_{i=0}^{d-2}a_i(w)q^i - w(1-q)\sum_{i=0}^{d-2}a_i(w)q^iw^i\\
&= \sum_{i=0}^{d-2}a_i(w)q^i\left[1-qw^d - w^{i+1} + qw^{i+1} \right]\\
&= (1-w)\sum_{i=0}^{d-2}a_i(w)q^i\left[(1+w+\cdots +w^i) +q(w^{i+1} + \cdots + w^{d-1})) \right].\\
\end{align*}

Finally, dividing by $w-1$ and then setting $w=1$ gives
\begin{align*}
F_d(q,1) &= \sum_{i=0}^{d-2}a_i(1)q^i\left[(i+1) +q(d-i-1)) \right]\\
&= \sum_{i=0}^{d-2}a_i(1)q^i\left[1+(d-1)q +i(1-q) \right]\\
&= (1+(d-1)q)F_{d-1}(q, 1)+q(1-q)F'_{d-1}(q,1),
\end{align*}
as desired.
\end{proof}

\section{Ramanujan--Like Congruences Satisfied by $s_d(n)$}\label{sec:congproofs}
We begin this section by noting the following corollary of Theorem \ref{thm:genfcnforlinks}, which gives an alternate form of the generating function for $s_d(n)$.
\begin{cor} \label{cor:2.2}
We have
\begin{equation*}
\label{alt_gen_fn_s_d_ver2}
\sum_{n=0}^\infty s_d(n) q^n = \prod_{n=1}^\infty \left( \sum_{j=0}^\infty  (j+1)^d q^{jn} \right).  
\end{equation*}
\end{cor}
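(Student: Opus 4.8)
The plan is to start from Theorem~\ref{thm:genfcnforlinks}, which gives
\[
\sum_{n=0}^\infty s_d(n)q^n = \prod_{n=1}^\infty \frac{A_d(q^n)}{(1-q^n)^{d+1}},
\]
and to prove that each factor $\frac{A_d(x)}{(1-x)^{d+1}}$ (with $x = q^n$) equals $\sum_{j=0}^\infty (j+1)^d x^j$. Since the product is taken over the same index $n$ on both sides and the right-hand side of Corollary~\ref{cor:2.2} has factor $\sum_{j=0}^\infty (j+1)^d q^{jn}$, it suffices to establish the single-variable identity
\[
\sum_{j=0}^\infty (j+1)^d x^j = \frac{A_d(x)}{(1-x)^{d+1}},
\]
after which substituting $x = q^n$ and multiplying over all $n\geq 1$ gives the corollary immediately.

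This reduced identity is a classical generating-function characterization of the Eulerian polynomials, so the main task is to supply a self-contained derivation using only the recurrence \eqref{eulerian_poly_recur} stated in the excerpt (together with $A_0(x)=1$), since that is the definition we are handed. First I would define the power-series side $P_d(x) \coloneqq \sum_{j=0}^\infty (j+1)^d x^j$ and verify the base case $d=0$: there $P_0(x) = \sum_{j\geq 0} x^j = \frac{1}{1-x} = \frac{A_0(x)}{(1-x)^1}$, matching. Then I would prove by induction on $d$ that $(1-x)^{d+1}P_d(x) = A_d(x)$ by showing the left-hand side satisfies the same recurrence as $A_d$. The natural tool is the Euler operator $x\frac{\mathrm{d}}{\mathrm{d}x}$: applying it to $P_{d-1}(x) = \sum_j j^{d-1}\,(\text{shift})$ raises the exponent on $(j+1)$ by one, so that $P_d(x)$ and $P_{d-1}(x)$ are linked by differentiation, namely $P_d(x) = \bigl(1 + x\tfrac{\mathrm d}{\mathrm dx}\bigr)P_{d-1}(x)$ after accounting for the $(j+1)$ versus $j$ shift. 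Setting $B_d(x) \coloneqq (1-x)^{d+1}P_d(x)$, I would compute $B_d$ in terms of $B_{d-1}$ and $B_{d-1}'$ by differentiating the relation $P_{d-1} = B_{d-1}/(1-x)^d$, and then check that the resulting expression matches $(1+(d-1)x)B_{d-1}(x) + x(1-x)B_{d-1}'(x)$, which is precisely \eqref{eulerian_poly_recur}.

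The step I expect to be the main obstacle is the bookkeeping in that differentiation: I must carefully track how the factor $(1-x)^{d+1}$ interacts with the Euler operator, since $x\frac{\mathrm d}{\mathrm dx}$ produces a $-(d)x(1-x)^{d}\cdot(\cdots)$ term from differentiating the denominator power, and these must recombine to yield exactly the coefficients $1+(d-1)x$ and $x(1-x)$ demanded by \eqref{eulerian_poly_recur}. A cleaner route that sidesteps some of this, and which I would present as the primary argument, is to avoid reproving the Eulerian generating identity from scratch and instead cite the standard fact that $\sum_{j\geq 0}(j+1)^d x^j = A_d(x)/(1-x)^{d+1}$ is the well-known Eulerian-polynomial generating function; then the only remaining work is the substitution $x = q^n$ and the observation that taking the product over $n\geq 1$ converts Theorem~\ref{thm:genfcnforlinks} into the stated form. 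Whichever route is chosen, no convergence subtlety arises since all identities are formal in $q$ (each $q^n$-factor is a well-defined power series and the product is a formal infinite product with well-defined coefficients), so the proof closes cleanly once the single-factor identity is in hand.
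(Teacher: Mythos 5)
Your proposal is correct and matches the paper's own proof: the paper likewise derives Corollary~\ref{cor:2.2} from Theorem~\ref{thm:genfcnforlinks} by citing the classical Euler identity $\sum_{j=0}^\infty (j+1)^d q^{jn} = A_d(q^n)/(1-q^n)^{d+1}$ and substituting it factor by factor. Your supplementary induction sketch (via $P_d = \bigl(1 + x\tfrac{\mathrm{d}}{\mathrm{d}x}\bigr)P_{d-1}$ and $B_d = (1-x)^{d+1}P_d$, which indeed reproduces the recurrence \eqref{eulerian_poly_recur}) is a sound self-contained verification of that identity, which the paper simply cites rather than proves.
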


\begin{proof}
This follows immediately from Theorem \ref{thm:genfcnforlinks}, together with the following identity proved by Euler in \cite{Eul}.
$$
\sum_{j=0}^\infty  (j+1)^d q^{jn} = \frac{A_{d}(q^n)}{(1-q^{n})^{d+1}}.$$
\end{proof}

\begin{rem}
One can view Corollary \ref{cor:2.2} combinatorially by building an arbitrary Schmidt type $d$-fold partition diamond as follows. First, one may choose the nodes $a_0, a_1, \ldots$ by picking the difference, $j_n,$ between $a_{n-1}$ and $a_n$ for each positive integer $n$ (where $j_n=0$ for sufficiently large $n$). This completely determines all of the linking nodes $a_0 = \sum_{n\geq 1}j_n,$ $a_1=\sum_{n\geq 2}j_n,$ etc., and the $q^{j_nn}$ keeps track of their contribution. Then, there are $(j_n+1)^d$ options for the nodes $b_{n,1}, \ldots, b_{n,d}$ (since there are $j_n+1$ possibilities for each of those nodes).
\end{rem}

We utilize this new view of the generating function for $s_d(n)$ to easily prove a variety of arithmetic properties satisfied by these functions.  Prior to doing so, we remind the reader of two well--known results which will also be helpful below:  

\begin{lem}(Euler's Pentagonal Number Theorem)
\label{EulerPNT}
We have 
\begin{equation*}
\prod_{m=1}^\infty (1-q^m) = \sum_{n=-\infty}^{\infty} (-1)^n q^{n(3n+1)/2}.
\end{equation*}
\end{lem}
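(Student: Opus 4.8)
The plan is to give Franklin's classical involutive proof. First I would expand the product as a signed sum over partitions into distinct parts: since each factor of $\prod_{m=1}^\infty(1-q^m)$ contributes either $1$ or $-q^m$, one has $\prod_{m=1}^\infty(1-q^m) = \sum_{\lambda} (-1)^{\ell(\lambda)} q^{|\lambda|}$, where $\lambda$ ranges over all partitions into \emph{distinct} parts and $\ell(\lambda)$ denotes the number of parts. Hence the coefficient of $q^N$ equals $D_e(N) - D_o(N)$, the number of partitions of $N$ into an even number of distinct parts minus the number into an odd number of distinct parts. The goal is to show that this difference vanishes unless $N$ is a generalized pentagonal number $n(3n+1)/2$, in which case it equals $(-1)^n$.

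Next I would construct a sign-reversing involution on the set of partitions into distinct parts. Given $\lambda = (\lambda_1 > \cdots > \lambda_k)$, let $s = \lambda_k$ be the smallest part and let $b$ be the length of the maximal chain of consecutive integers at the top, i.e.\ the largest index for which $\lambda_1, \ldots, \lambda_b$ equal $\lambda_1, \lambda_1 - 1, \ldots, \lambda_1 - b + 1$. When $s \le b$ I would remove the smallest part and redistribute its $s$ units by adding $1$ to each of the $s$ largest parts; when $s > b$ I would instead strip $1$ from each of the $b$ largest parts and adjoin a new smallest part of size $b$. Each move changes $\ell(\lambda)$ by exactly $1$, hence flips the sign $(-1)^{\ell(\lambda)}$ while preserving $|\lambda|$, and the two moves are mutually inverse, so they pair off terms of opposite sign and cancel them in $D_e(N) - D_o(N)$.

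Then I would identify the fixed points, which occur precisely when the two operations collide: namely when the top chain occupies the entire partition ($b = k$) and the smallest part meets it, so that neither move yields a valid partition into distinct parts. This forces $\lambda$ to be a staircase $k, k+1, \ldots, 2k-1$ (the case $s = k$, of size $k(3k-1)/2$) or $k+1, k+2, \ldots, 2k$ (the case $s = k+1$, of size $k(3k+1)/2$), each carrying sign $(-1)^k$. Accounting for all surviving terms $(-1)^n q^{n(3n+1)/2}$, the values $n < 0$ come from the first staircase family (with $n = -k$), the values $n > 0$ from the second (with $n = k$), and $n = 0$ from the empty partition; this yields the claimed identity.

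The main obstacle is the careful case analysis at the boundary: one must verify that the two operations are genuinely well-defined and inverse to each other on every non-fixed partition—in particular checking the degenerate situation $b = k$ with $s = b$ or $s = b+1$, where a naive move would either destroy distinctness or step outside the partition set—and confirm that these degenerate configurations are exactly the pentagonal staircases and nothing else. (Alternatively, one could deduce the identity as the specialization $z \mapsto 1$, $q \mapsto q^{3/2}$ of the Jacobi triple product, but that route merely relocates the combinatorial content.)
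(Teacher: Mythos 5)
Your proof is correct, but it takes a genuinely different route from the paper: the paper does not prove this lemma at all, deferring entirely to the literature with the citation \cite[Eq. (1.6.1)]{H}, whereas you give Franklin's classical involutive proof in full outline. Your argument is sound at every step: the expansion $\prod_{m\geq 1}(1-q^m)=\sum_{\lambda}(-1)^{\ell(\lambda)}q^{|\lambda|}$ over partitions into distinct parts, the two moves governed by comparing the smallest part $s$ with the length $b$ of the maximal staircase at the top, the observation that the moves are sign-reversing and mutually inverse, and the identification of the fixed points as exactly the two staircase families (when $b=k$ meets $s=k$ or $s=k+1$) of sizes $k(3k-1)/2$ and $k(3k+1)/2$, each with sign $(-1)^k$; your re-indexing of these families by $n=-k$ and $n=k$ correctly recovers the bilateral sum $\sum_{n=-\infty}^{\infty}(-1)^n q^{n(3n+1)/2}$, with $n=0$ accounting for the empty partition. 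What your approach buys is a self-contained, entirely elementary proof that could be inserted into the paper without external dependencies; what the paper's citation buys is brevity, consistent with its use of both this result and Jacobi's identity (Lemma \ref{Jacobi}) as black-box classical tools whose proofs are standard and tangential to the paper's contributions on $d$-fold partition diamonds.
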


\begin{proof}
See \cite[Eq. (1.6.1)]{H}.    
\end{proof}
\begin{lem}(Jacobi) 
\label{Jacobi}
We have 
\begin{equation*}
\prod_{m=1}^\infty (1-q^m)^3 = \sum_{n=0}^{\infty} (-1)^n (2n+1) q^{n(n+1)/2}.
\end{equation*}
\end{lem}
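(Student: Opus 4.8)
The plan is to deduce this identity from the Jacobi triple product, exactly as the already-cited Euler Pentagonal Number Theorem (Lemma \ref{EulerPNT}) is obtained from the same source; accordingly I would record the triple product as the single external input, citing it (as with Lemma \ref{EulerPNT}, one may simply refer to \cite{H}). In the form I would use it reads
\[
\prod_{n=1}^\infty (1-q^n)(1-zq^{n-1})(1-z^{-1}q^n) = \sum_{m=-\infty}^\infty (-1)^m z^m q^{m(m-1)/2},
\]
understood as an identity of formal power series in $q$ whose coefficients are Laurent polynomials in $z$: for each fixed power of $q$ only finitely many $m$ contribute (since $m(m-1)/2\to\infty$ as $|m|\to\infty$), so all the $z$-manipulations below are legitimate term by term.

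First I would isolate from the middle product the $n=1$ factor $(1-zq^{0})=(1-z)$, and reindex what remains via $\prod_{n\ge 2}(1-zq^{n-1}) = \prod_{n\ge 1}(1-zq^{n})$, rewriting the left-hand side as $(1-z)\prod_{n=1}^\infty (1-q^n)(1-zq^n)(1-z^{-1}q^n)$. Writing $S(z)$ for the right-hand sum, the involution $m\leftrightarrow 1-m$ (which fixes the exponent $m(m-1)/2$ and flips the sign $(-1)^m$) shows $S(1)=0$. Dividing by $(1-z)$ and letting $z\to 1$, the product tends to $\prod_{n\ge 1}(1-q^n)^3$, while $S(z)/(1-z)=\bigl(S(z)-S(1)\bigr)/(1-z)\to -S'(1)$; this $0/0$ limit is clean precisely because everything lives in the formal power series ring, where $(1-z)$ divides $S(z)$ coefficientwise. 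Differentiating $S$ term by term gives $S'(1)=\sum_m (-1)^m m\,q^{m(m-1)/2}$, so $\prod_{n\ge 1}(1-q^n)^3 = \sum_m (-1)^{m+1} m\,q^{m(m-1)/2}$.

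Finally I would collapse this bilateral sum into the stated one-sided form by grouping $m=j+1$ with $m=-j$ for each $j\ge 0$: both share the exponent $j(j+1)/2$, and their contributions add to $(-1)^j\bigl[(j+1)+j\bigr]q^{j(j+1)/2}=(-1)^j(2j+1)q^{j(j+1)/2}$; as $j$ runs over $\{0,1,2,\dots\}$ the indices $m=j+1$ and $m=-j$ sweep out every integer exactly once, giving $\sum_{j\ge 0}(-1)^j(2j+1)q^{j(j+1)/2}$ as claimed. The only genuinely substantial ingredient here is the triple product itself; everything afterward is the limit at $z=1$ and the bookkeeping of the reindexing, so I expect the main obstacle to be merely stating and justifying the triple product, which I would cite rather than reprove.
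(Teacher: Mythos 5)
Your proof is correct, but it takes a genuinely different route from the paper, whose entire ``proof'' of Lemma \ref{Jacobi} is the citation \cite[Eq.\ (1.7.1)]{H} --- exactly parallel to how it treats Euler's Pentagonal Number Theorem (Lemma \ref{EulerPNT}). Your derivation is the classical specialization of the Jacobi triple product: your form of it, $\prod_{n\geq 1}(1-q^n)(1-zq^{n-1})(1-z^{-1}q^n)=\sum_{m\in\ZZ}(-1)^m z^m q^{m(m-1)/2}$, is the right normalization; the involution $m\mapsto 1-m$ does fix the exponent $m(m-1)/2$ while flipping $(-1)^m$, so $S(1)=0$ holds coefficientwise and $(1-z)$ divides each $q$-coefficient (a Laurent polynomial in $z$) of $S(z)$, which is exactly the justification needed for the $0/0$ limit $S(z)/(1-z)\to -S'(1)$; and the folding of the bilateral sum by pairing $m=j+1$ with $m=-j$ checks out, since $(-1)^{j+2}(j+1)+(-1)^{-j+1}(-j)=(-1)^j(2j+1)$ with common exponent $j(j+1)/2$, and these indices sweep $\ZZ$ exactly once. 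What your approach buys is a self-contained proof modulo a single standard black box (the triple product, which you sensibly cite rather than reprove); what the paper buys is brevity, outsourcing a thoroughly classical identity so as to keep the focus on the congruences for $s_d(n)$. Both are legitimate, and your write-up would serve as a drop-in replacement if one wanted the lemma proved rather than quoted.
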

\begin{proof}
See \cite[Eq. (1.7.1)]{H}.    
\end{proof}

We now prove the following theorem which yields an infinite family of Ramanujan--like congruences modulo arbitrarily large powers of 2.
\begin{thm}
\label{congs_mod_powers_2}
For all $d\geq 1$ and all $n\geq 0,$ 
$s_d(2n+1) \equiv 0 \pmod{2^d}.$  
\end{thm}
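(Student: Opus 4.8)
The plan is to work from the product form of the generating function provided by Corollary \ref{cor:2.2}. Writing $G(q) := \sum_{n\ge0}s_d(n)q^n$ and $E_d(x) := \sum_{j\ge0}(j+1)^d x^j$, that corollary gives $G(q) = \prod_{m=1}^\infty E_d(q^m)$. Extracting the odd-indexed coefficients is the same as isolating the odd part $\tfrac12\bigl(G(q)-G(-q)\bigr) = \sum_{n\ge0}s_d(2n+1)q^{2n+1}$, so the desired congruence $s_d(2n+1)\equiv0\pmod{2^d}$ is equivalent to the power-series congruence $G(q)\equiv G(-q)\pmod{2^{d+1}}$. I would state this reduction first and then prove that congruence.

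The key elementary observation is to split each factor $E_d(x)$ according to the parity of the summation index $j$. Because $(2i+2)^d = 2^d(i+1)^d$, the odd-exponent part of $E_d$ automatically carries a full factor of $2^d$; explicitly,
\[ E_d(x) = O_d(x^2) + 2^d\,x\,E_d(x^2), \qquad O_d(y):=\sum_{i\ge0}(2i+1)^d\,y^i. \]
Substituting $x = q^m$, I note that when $m$ is even every monomial in $E_d(q^m)$ has even $q$-degree, so that factor is unchanged under $q\mapsto -q$. When $m$ is odd, the factor appearing in $G(q)$ is $A_m + B_m$ and the one appearing in $G(-q)$ is $A_m - B_m$, where $A_m := O_d(q^{2m})$ and $B_m := 2^d q^m E_d(q^{2m})$ is manifestly divisible by $2^d$ (here I use $(-q^m)^2 = q^{2m}$).

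To conclude, I would factor the common even-$m$ part out of both $G(q)$ and $G(-q)$ and expand the difference over the odd-$m$ factors using the identity $\prod_m(A_m+B_m)-\prod_m(A_m-B_m) = 2\sum_{|T|\text{ odd}}\prod_{m\in T}B_m\prod_{m\notin T}A_m$. Every surviving term contains at least one $B_m$, hence is divisible by $2^d$, and combined with the leading factor $2$ this gives divisibility by $2^{d+1}$, which is exactly $G(q)\equiv G(-q)\pmod{2^{d+1}}$. I should also remark that the infinite product is a legitimate formal power series, since $E_d(q^m) = 1 + O(q^m)$ means only finitely many factors influence any fixed coefficient, so the expansion is valid coefficientwise.

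The main obstacle is conceptual rather than computational: recognizing that passing to the Corollary \ref{cor:2.2} form and splitting $E_d$ by exponent parity is the productive move, after which the factor $2^d$ falls out of $(2i+2)^d = 2^d(i+1)^d$ in one stroke. The one point demanding a moment's care is why the \emph{full} power $2^d$ survives rather than a single factor of $2$: an odd-degree monomial in $q$ must involve an odd, hence nonzero, number of the odd-$m$ ``switches'' $B_m$, and each such switch already supplies the entire factor $2^d$, so no cancellation can erode the exponent.
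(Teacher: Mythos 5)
Your proof is correct and takes essentially the same approach as the paper: both start from Corollary \ref{cor:2.2} and exploit $(2i+2)^d = 2^d(i+1)^d$ to see that the odd-index terms of each factor $\sum_{j\geq 0}(j+1)^dq^{jn}$ carry a full factor of $2^d$. The paper simply reduces the whole product modulo $2^d$, where each factor becomes $1+3^dq^{2n}+5^dq^{4n}+\cdots$, a series in $q^2$, so odd coefficients vanish; your formulation via $G(q)\equiv G(-q)\pmod{2^{d+1}}$ with the odd-subset expansion is an equivalent, slightly more elaborate packaging of that same observation.
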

\begin{proof}
For fixed $d\geq 1,$ 
\begin{eqnarray*}
\sum_{n=0}^\infty s_d(n) q^n 
&=& 
\prod_{n=1}^\infty \left( 1 + 2^dq^n + 3^dq^{2n} +4^dq^{3n} + 5^dq^{4n} + 6^dq^{5n} + \dots  \right)  \\
&\equiv & 
\prod_{n=1}^\infty \left( 1 + 0q^n + 3^dq^{2n} +0q^{3n} + 5^dq^{4n} + 0q^{5n} + \dots  \right)  \pmod{2^d} \\
&=& 
\prod_{n=1}^\infty \left( 1 +  3^dq^{2n}  + 5^dq^{4n}  + \dots  \right).
\end{eqnarray*}
In the penultimate line above, we have used the fact that $2^d \, \vert \, (2j)^d$ for all $j\geq 1.$  The last expression above is a function of $q^2,$ and this immediately implies the result.  
\end{proof}
We note, in passing, that the $d=2$ case of Theorem \ref{congs_mod_powers_2} was proven by Andrews and Paule \cite[Corollary 2]{AndrewsPaule22}.

We next prove the following overarching lemma that will provide us with the machinery needed to prove infinite families of divisibility properties satisfied by $s_d(n).$  

\begin{lem}
\label{internal_congs_mod_p}
Let $k$ and $r$ be nonnegative integers and let $m\geq 2$.  For all $n\geq 0,$ $s_{\phi(m)k+r}(n) \equiv s_r(n) \pmod{m}$ where $\phi(m)$ is Euler's totient function.    
\end{lem}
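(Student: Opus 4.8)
The plan is to work directly with the product form of the generating function provided by Corollary~\ref{cor:2.2}, namely
\[
\sum_{n=0}^\infty s_d(n)q^n = \prod_{n=1}^\infty\left(\sum_{j=0}^\infty (j+1)^d q^{jn}\right).
\]
The key observation is that the dependence on $d$ enters only through the exponents $(j+1)^d$ on the integer bases $j+1$. So the entire comparison between $s_{\phi(m)k+r}(n)$ and $s_r(n)$ reduces to comparing the coefficients $(j+1)^{\phi(m)k+r}$ with $(j+1)^r$ modulo $m$. First I would fix $m\geq 2$ and examine, for each fixed $j\geq 0$, the quantity $(j+1)^{\phi(m)k+r} - (j+1)^r = (j+1)^r\bigl((j+1)^{\phi(m)k}-1\bigr)$ modulo $m$, and argue that this is $\equiv 0 \pmod m$ for every $j$.

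The main tool for that step is Euler's theorem: if $\gcd(j+1,m)=1$, then $(j+1)^{\phi(m)}\equiv 1\pmod m$, hence $(j+1)^{\phi(m)k}\equiv 1\pmod m$, and the difference vanishes. The genuinely delicate case — and the one I expect to be the main obstacle — is when $\gcd(j+1,m)>1$, since then Euler's theorem does not apply and $(j+1)^{\phi(m)k}$ need not be $\equiv 1$. Here I would use the extra factor of $(j+1)^r$ out front together with the fact that $\phi(m)k$ is at least as large as the highest power of any prime dividing $m$ (for $k\geq 1$), so that $(j+1)^{\phi(m)k+r}$ and $(j+1)^r$ are congruent modulo each prime-power component of $m$ by a case analysis on whether that prime divides $j+1$. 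Concretely, for each prime power $p^a\,\|\,m$: if $p\mid j+1$ then both $(j+1)^{\phi(m)k+r}$ and $(j+1)^r$ are divisible by $p^a$ (using that the exponents are large enough), whereas if $p\nmid j+1$ then $(j+1)^{\phi(m)}\equiv 1\pmod{p^a}$ and one gets equality that way; the Chinese Remainder Theorem then assembles these into the congruence modulo $m$. I should be careful to handle the boundary case $k=0$ separately, where the statement is the trivial identity $s_r(n)=s_r(n)$.

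Once the coefficientwise congruence $(j+1)^{\phi(m)k+r}\equiv (j+1)^r \pmod m$ is established for all $j\geq 0$, the conclusion follows formally. Each factor $\sum_{j=0}^\infty (j+1)^{\phi(m)k+r}q^{jn}$ is congruent modulo $m$, coefficient by coefficient, to $\sum_{j=0}^\infty (j+1)^r q^{jn}$, and multiplying these congruent power series together (a valid operation since reduction modulo $m$ is a ring homomorphism on $\mathbb{Z}[[q]]$, and each coefficient of the product is a finite sum) yields
\[
\prod_{n=1}^\infty\left(\sum_{j=0}^\infty (j+1)^{\phi(m)k+r}q^{jn}\right) \equiv \prod_{n=1}^\infty\left(\sum_{j=0}^\infty (j+1)^r q^{jn}\right)\pmod m.
\]
Comparing coefficients of $q^n$ on both sides gives $s_{\phi(m)k+r}(n)\equiv s_r(n)\pmod m$ for all $n\geq 0$, as desired. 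The only point requiring mild care in this last paragraph is the justification that congruences of the infinite-product factors pass to the product, but since every coefficient of the product is determined by finitely many factors and finitely many terms within each factor, this is routine.
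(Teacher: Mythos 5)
Your overall strategy is the same as the paper's: write both generating functions via Corollary~\ref{cor:2.2} as products of the factors $\sum_{j\geq 0}(j+1)^d q^{jn}$, establish the coefficientwise congruence $(j+1)^{\phi(m)k+r}\equiv (j+1)^r \pmod{m}$, and pass it through the product (your remark that each coefficient of the product involves only finitely many terms is the right justification for that last step). The paper does exactly this, citing only ``Euler's generalization of Fermat's Little Theorem.'' You went further by correctly identifying the case $\gcd(j+1,m)>1$ as the real obstacle --- the paper's proof silently skips it --- but your resolution of that case contains a false step. For a prime power $p^a$ exactly dividing $m$ and $p \mid j+1$, you assert that \emph{both} $(j+1)^{\phi(m)k+r}$ and $(j+1)^r$ are divisible by $p^a$. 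The first is true for $k\geq 1$ (since $\phi(m)\geq \phi(p^a)=p^{a-1}(p-1)\geq a$), but the second need not be, because $r$ is an arbitrary nonnegative integer: with $m=4$, $j+1=2$, $r=1$ one has $(j+1)^r=2\not\equiv 0\pmod{4}$, while $(j+1)^{\phi(4)k+1}=2^{2k+1}\equiv 0\pmod{4}$. So the coefficientwise congruence you need simply fails there.

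This gap cannot be repaired, because Lemma~\ref{internal_congs_mod_p} as literally stated is false for such moduli. Taking $m=4$, $k=1$, $r=1$, the claim reads $s_3(n)\equiv s_1(n)\pmod{4}$ for all $n$; but from Corollary~\ref{cor:2.2} (or the explicit products in Section~\ref{sec:intro}) one computes $s_3(1)=8$ and $s_1(1)=2$, and $8\not\equiv 2 \pmod 4$. What your case analysis actually proves is a corrected statement: the congruence $(j+1)^{\phi(m)k+r}\equiv(j+1)^r\pmod{p^a}$ holds whenever $p\nmid j+1$ (by Euler) or $r\geq a$ (both sides divisible by $p^a$), so the lemma is valid when $r\geq 1$ and $m$ is squarefree --- in particular for every prime $m$ --- and more generally when $r$ is at least the largest exponent in the prime factorization of $m$. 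That restricted version suffices for every application in the paper (only $m=5$ and $m=11$ are used, always with $r\geq 1$), so the downstream theorems are unaffected. As a proof of the lemma as stated, your argument and the paper's break at exactly the same point; the difference is that yours acknowledges the point exists, while the paper's invocation of Euler's theorem without a coprimality hypothesis hides it.
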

\begin{proof}
Note that 
\begin{eqnarray*}
&& 
\sum_{n=0}^\infty s_{\phi(m)k+r}(n)q^n \\
&=& 
\prod_{n=1}^\infty \left( 1 + 2^{\phi(m)k+r}q^n + 3^{\phi(m)k+r}q^{2n} +4^{\phi(m)k+r}q^{3n} \right. \\
&& \ \ \ \ \ \ \ \ \ \  \left. {}+ 5^{\phi(m)k+r}q^{4n} + 6^{\phi(m)k+r}q^{5n} + \dots  \right)  \\
&=& 
\prod_{n=1}^\infty \left( 1 + (2^{\phi(m)})^k\cdot 2^r q^n + (3^{\phi(m)})^k\cdot 3^rq^{2n} +(4^{\phi(m)})^k\cdot 4^rq^{3n} \right. \\
&& \ \ \ \ \ \ \ \ \ \  \left. {}+ (5^{\phi(m)})^k\cdot 5^rq^{4n} + (6^{\phi(m)})^k\cdot 6^rq^{5n} + \dots  \right)  \\
&\equiv & 
\prod_{n=1}^\infty \left( 1 + 2^r q^n + 3^rq^{2n} +4^rq^{3n} + 5^rq^{4n} +  6^rq^{5n} + \dots  \right)  \pmod{m}\\
&=& 
\sum_{n=0}^\infty s_{r}(n)q^n.
\end{eqnarray*}
The penultimate line above follows from Euler's generalization of Fermat's Little Theorem.
\end{proof}

We now transition to a consideration of various families of congruences modulo 5.  We first prove an infinite family of such congruences thanks to Lemma \ref{internal_congs_mod_p} which uses the function $s_1(n)$ as its starting point.   
\begin{thm}
\label{4k1_mod5}
For all $k\geq 0$ and all $n\geq 0,$ 
$$s_{4k+1}(5n+2) \equiv s_{4k+1}(5n+3) \equiv s_{4k+1}(5n+4) \equiv 0 \pmod{5}.$$  
\end{thm}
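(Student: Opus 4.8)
The plan is to reduce the claim about $s_{4k+1}(n)$ to a statement about $s_1(n)$ and then prove the congruences for $s_1$ directly. First I would invoke Lemma \ref{internal_congs_mod_p} with $m=5$, noting that $\phi(5)=4$, so that $s_{4k+1}(n)\equiv s_1(n)\pmod 5$ for all $n\geq 0$ and all $k\geq 0$. This collapses the entire infinite family to the single case $k=0$: it suffices to show that
\[
s_1(5n+2)\equiv s_1(5n+3)\equiv s_1(5n+4)\equiv 0\pmod 5.
\]
By Theorem \ref{thm:genfcnforlinks} the generating function for $s_1$ is $\prod_{n\geq 1}(1-q^n)^{-2}$, so the heart of the matter is to extract the residues of the coefficients of $\prod_{n\geq 1}(1-q^n)^{-2}$ in the three arithmetic progressions $5n+2,5n+3,5n+4$ modulo $5$.

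The main computational step is a standard dissection argument. I would write $\sum s_1(n)q^n = \prod(1-q^n)^{-2}$ and multiply and divide to express this in terms of $\prod(1-q^n)^5$, which is $\equiv \prod(1-q^{5n})\pmod 5$ by the freshman's dream (since $(1-q^n)^5\equiv 1-q^{5n}\pmod 5$). Concretely,
\[
\prod_{n\geq 1}\frac{1}{(1-q^n)^2}
=\frac{\prod_{n\geq 1}(1-q^n)^3}{\prod_{n\geq 1}(1-q^n)^5}
\equiv \frac{\prod_{n\geq 1}(1-q^n)^3}{\prod_{n\geq 1}(1-q^{5n})}\pmod 5 .
\]
The denominator on the right is a function of $q^5$, so it cannot affect which residue classes mod $5$ carry nonzero coefficients; the arithmetic-progression behavior mod $5$ is governed entirely by the numerator $\prod_{n\geq 1}(1-q^n)^3$. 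For this I would apply Jacobi's identity (Lemma \ref{Jacobi}), which expands $\prod(1-q^n)^3=\sum_{n\geq 0}(-1)^n(2n+1)q^{n(n+1)/2}$. The plan is then to determine which residues mod $5$ the triangular exponents $n(n+1)/2$ can occupy: a quick check of $n(n+1)/2 \bmod 5$ as $n$ runs over residues mod $5$ shows the exponents only fall into the classes $0,1,3\pmod 5$, never $2$ or $4$. Therefore the coefficient of $q^{5n+2}$ and of $q^{5n+4}$ in $\prod(1-q^n)^3$ vanishes identically.

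Combining these observations finishes the argument for two of the three progressions directly: since multiplying by the $q^5$-series denominator preserves the set of attainable residue classes mod $5$, the coefficients of $q^{5n+2}$ and $q^{5n+4}$ in $\prod(1-q^n)^{-2}$ are $\equiv 0\pmod 5$, giving $s_1(5n+2)\equiv s_1(5n+4)\equiv 0$. The remaining progression $5n+3$ is the delicate one, and I expect it to be the main obstacle: the exponent class $3$ \emph{is} attained by the triangular numbers (e.g. $n\equiv 2\pmod 5$ gives $n(n+1)/2\equiv 3$), so the coefficient of $q^{5n+3}$ in the numerator need not vanish termwise. To handle it I would examine more carefully the subseries of $\sum(-1)^n(2n+1)q^{n(n+1)/2}$ supported on exponents $\equiv 3\pmod 5$, namely those $n$ with $n\equiv 2\pmod 5$, for which the coefficient $(2n+1)$ satisfies $2n+1\equiv 0\pmod 5$; thus each surviving term already carries a factor of $5$, and the entire $5n+3$ component of $\prod(1-q^n)^3$ is $\equiv 0\pmod 5$. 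This yields $s_1(5n+3)\equiv 0\pmod 5$ as well, and together with Lemma \ref{internal_congs_mod_p} completes the proof for all $k\geq 0$.
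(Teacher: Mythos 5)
Your proposal is correct and follows essentially the same route as the paper's proof: the same reduction via Lemma \ref{internal_congs_mod_p}, the same rewriting of $\prod(1-q^n)^{-2}$ as $\prod(1-q^n)^3/\prod(1-q^n)^5 \equiv \prod(1-q^n)^3/\prod(1-q^{5n}) \pmod{5}$, Jacobi's identity for the numerator, and the same two-part analysis (classes $2,4$ are never triangular; class $3$ forces $j\equiv 2 \pmod 5$, whence $2j+1\equiv 0$). The only cosmetic difference is that you invoke Lemma \ref{internal_congs_mod_p} at the outset rather than at the end.
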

\begin{proof}
Note that 
\begin{eqnarray*}
\sum_{n=0}^\infty s_1(n)q^n 
&=& 
\prod_{n=1}^\infty \frac{1}{(1-q^n)^2}  \\
&=& 
\prod_{n=1}^\infty \frac{(1-q^n)^3}{(1-q^n)^5} \\
&\equiv & 
\prod_{n=1}^\infty \frac{(1-q^n)^3}{(1-q^{5n})} \pmod{5} \\
&=& 
\left( \sum_{j=0}^\infty (-1)^j(2j+1)q^{j(j+1)/2} \right) \prod_{n=1}^\infty \frac{1}{(1-q^{5n})} 
\end{eqnarray*}
thanks to Lemma \ref{Jacobi}.  Since $\prod_{n=1}^\infty \frac{1}{(1-q^{5n})}$ is a function of $q^5$, the above product will satisfy a congruence modulo 5 in an arithmetic progression of the form $5n+r$, for $0\leq r\leq 4,$ if and only if such a congruence is satisfied by 
$$
\left( \sum_{j=0}^\infty (-1)^j(2j+1)q^{j(j+1)/2} \right). 
$$
Since $5n+2$ and $5n+4$ are never triangular numbers, this immediately tells us that, for all $n\geq 0,$
$$s_1(5n+2) \equiv s_1(5n+4) \equiv 0 \pmod{5}.$$
Moreover, we know that $5n+3 = j(j+1)/2$ if and only if $j\equiv 2 \pmod{5},$ and in such cases, $2j+1 \equiv 0 \pmod{5}.$  Because of the presence of the factor $2j+1$ in the sum above, we then see that, for all $n\geq 0,$  
$$s_1(5n+3) \equiv 0 \pmod{5}$$
since $2\cdot 2+1 = 5 \equiv 0 \pmod{5}.$
The remainder of the proof immediately follows from Lemma \ref{internal_congs_mod_p}.  
\end{proof}
We note, in passing, that Baruah and Sarmah \cite[Theorem 5.1, equation (5.4)]{BS} also provided a proof of the $k=0$ case of Theorem \ref{4k1_mod5}.

We next consider the following modulo 5 congruences, with a focus on the case $d=2.$
\begin{thm}
\label{d2_25_23}
For all $k\geq 0$ and all $n\geq 0,$ 
$$s_{4k+2}(25n+23) \equiv 0 \pmod{5}.$$   
\end{thm}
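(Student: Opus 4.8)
The plan is to collapse the whole family onto the single case $d=2$ and then run a $5$-dissection argument on the generating function modulo $5$, in the spirit of the proof of Theorem~\ref{4k1_mod5}.

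First I would invoke Lemma~\ref{internal_congs_mod_p} with $m=5$ and $r=2$. Since $\phi(5)=4$, this gives $s_{4k+2}(n)\equiv s_2(n)\pmod 5$ for every $n\ge 0$, so the theorem reduces to the single congruence $s_2(25n+23)\equiv 0\pmod 5$. Next I would reduce the generating function modulo $5$. By Theorem~\ref{thm:genfcnforlinks},
\[
\sum_{n=0}^\infty s_2(n)q^n=\prod_{n=1}^\infty\frac{1+q^n}{(1-q^n)^3}=\frac{\prod_{n\ge1}(1-q^{2n})}{\prod_{n\ge1}(1-q^n)^4}.
\]
Using the standard congruence $\prod_{n\ge1}(1-q^n)^5\equiv\prod_{n\ge1}(1-q^{5n})\pmod 5$ to rewrite $\prod(1-q^n)^{-4}\equiv\prod(1-q^n)\big/\prod(1-q^{5n})$, I obtain
\[
\sum_{n=0}^\infty s_2(n)q^n\equiv\frac{G(q)}{\prod_{n\ge1}(1-q^{5n})}\pmod 5,\qquad G(q):=\prod_{n\ge1}(1-q^n)(1-q^{2n}).
\]

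The heart of the argument is a residue analysis of $G$. Applying Euler's Pentagonal Number Theorem (Lemma~\ref{EulerPNT}) to both factors writes $G(q)=\sum_{i,j}(-1)^{i+j}q^{E(i,j)}$ with $E(i,j)=\tfrac{i(3i+1)}{2}+j(3j+1)$. A direct check of the five residues shows $\tfrac{i(3i+1)}{2}\equiv 0,2,2,0,1$ and $j(3j+1)\equiv 0,4,4,0,2\pmod 5$ as $i,j$ run through $0,1,2,3,4$; combining these, $E(i,j)\equiv 3\pmod5$ is possible only via $1+2$, which forces $i\equiv j\equiv -1\pmod5$. For such $i,j$, writing $i=5u-1$ and $j=5v-1$ and expanding shows $\tfrac{i(3i+1)}{2}\equiv 1$ and $j(3j+1)\equiv 2\pmod{25}$, so in fact $E(i,j)\equiv 3\pmod{25}$. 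Hence, writing $G(q)=\sum_a c(a)q^a$, we conclude $c(a)=0$ whenever $a\equiv 8,13,18,23\pmod{25}$.

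Finally I would extract the progression. Since $\prod(1-q^{5n})^{-1}=\sum_{m\ge0}p(m)q^{5m}$, the congruence above gives $s_2(25n+23)\equiv\sum_{m\ge0}c(25n+23-5m)\,p(m)\pmod5$. A nonzero term requires $25n+23-5m\equiv 3\pmod{25}$, i.e.\ $m\equiv4\pmod5$; but then $p(m)=p(5\ell+4)\equiv0\pmod5$ by Ramanujan's congruence, so every surviving term vanishes modulo $5$, which proves the claim. The main obstacle is the modular bookkeeping of the third step — controlling $E(i,j)$ simultaneously modulo $5$ and modulo $25$ so as to pin down $c(a)$ to the class $a\equiv 3\pmod{25}$ — since everything else is a routine dissection once that vanishing is in hand, with the final cancellation supplied by Ramanujan's congruence for $p(5\ell+4)$.
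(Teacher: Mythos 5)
Your proposal is correct and follows essentially the same route as the paper: the same rewriting of $\prod(1+q^n)/(1-q^n)^3$ as $\prod(1-q^{2n})(1-q^n)/(1-q^n)^5$, reduction modulo $5$ via $\prod(1-q^n)^5\equiv\prod(1-q^{5n})$, Euler's Pentagonal Number Theorem on both factors, the observation that exponents $\equiv 3\pmod 5$ of the double sum are in fact $\equiv 3\pmod{25}$, and the final appeal to Ramanujan's congruence $p(5m+4)\equiv 0\pmod 5$ together with Lemma~\ref{internal_congs_mod_p}. The only difference is that you supply the explicit residue computation (pinning down $i\equiv j\equiv -1\pmod 5$ and the resulting exponent class modulo $25$) that the paper merely asserts, which is a welcome level of detail rather than a deviation.
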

\begin{proof}
We begin by noting that 
\begin{eqnarray*}
\sum_{n=0}^\infty s_2(n) q^n 
&=& 
\prod_{n=1}^\infty \frac{1+q^n}{(1-q^n)^3}  \\
&=& 
\prod_{n=1}^\infty \frac{1-q^{2n}}{(1-q^n)^4}  \\
&=& 
\prod_{n=1}^\infty \frac{(1-q^{2n})(1-q^n)}{(1-q^n)^5}  \\
&\equiv & 
\prod_{n=1}^\infty \frac{(1-q^{2n})(1-q^n)}{(1-q^{5n})} \pmod{5} \\
&=& 
\left( \sum_{j, k=-\infty}^\infty (-1)^{j+k}q^{2(j(3j+1)/2) + k(3k+1)/2} \right) \prod_{n=1}^\infty \frac{1}{(1-q^{5n})} 
\end{eqnarray*}
thanks to Lemma \ref{EulerPNT}.  
At this point, we consider those terms of the form $q^{25n+23}$ in the power series representation of the last expression above. There are no terms of the form $q^{25n+8}, q^{25n+13}, q^{25n+18},$ or  $q^{25n+23}$ in the double sum above, although there are terms of the form $q^{25n+3}.$ Thus, once we multiply the double sum above by 
\begin{equation}
\label{genfn_p5n}
    \prod_{n=1}^\infty \frac{1}{(1-q^{5n})} = \sum_{n=0}^\infty p(n)q^{5n},
\end{equation} 
we see that the only way to obtain a term of the form $q^{25n+23}$ which can contribute to $s_2(25n+23)$ modulo 5 is to multiply by a term of the form $q^{5(5m +4)}$ from the power series representation of (\ref{genfn_p5n}).  This will then contribute, modulo 5, to the value of $s_2(25n+23)$ by multiplying by the value $p(5m +4).$  Thanks to Ramanujan's well--known result that, for all $m \geq 0,$ $p(5m +4)\equiv 0 \pmod{5},$ we then know that, for all $n\geq 0,$ $s_{2}(25n+23) \equiv 0 \pmod{5}.$  The theorem then follows thanks to Lemma \ref{internal_congs_mod_p}.      
\end{proof}

We can also show that the functions $s_{4k+3}$ satisfy a rich set of congruences modulo 5 via the following:

\begin{thm}
\label{4k3_mod5_5n}
For all $k\geq 0$ and all $n\geq 0,$ 
$$s_{4k+3}(5n+2) \equiv s_{4k+3}(5n+4) \equiv 0 \pmod{5}.$$  
\end{thm}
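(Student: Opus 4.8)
The plan is to reduce the whole family to the single case $d=3$ and then analyze the resulting generating function modulo $5$ using classical theta identities, exactly in the spirit of the proofs of Theorems \ref{4k1_mod5} and \ref{d2_25_23}. Since $\phi(5)=4$, Lemma \ref{internal_congs_mod_p} with $m=5$ gives $s_{4k+3}(n)\equiv s_3(n)\pmod 5$ for all $n\geq 0$, so it suffices to prove that $s_3(5n+2)\equiv s_3(5n+4)\equiv 0\pmod 5$.

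Next I would simplify the generating function for $s_3$ modulo $5$. Starting from Theorem \ref{thm:genfcnforlinks} and writing $E(q^a)\coloneqq\prod_{n\ge1}(1-q^{an})$, I reduce $A_3(q^n)=1+4q^n+q^{2n}\equiv 1-q^n+q^{2n}\pmod 5$ and use $(1-q^n)^5\equiv 1-q^{5n}$ to split off a factor that is a function of $q^5$. Combining this with the factorization $1-q^n+q^{2n}=\tfrac{(1-q^{6n})(1-q^n)}{(1-q^{3n})(1-q^{2n})}$ and collecting products, I expect to arrive at
\[\sum_{n=0}^\infty s_3(n)q^n \equiv \frac{E(q)^2E(q^6)}{E(q^2)E(q^3)}\cdot\frac{1}{E(q^5)}\pmod 5.\]
Because $1/E(q^5)$ is a function of $q^5$, the residues modulo $5$ of the exponents on the right are governed entirely by the eta quotient $E(q)^2E(q^6)/(E(q^2)E(q^3))$.

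The heart of the argument, and the step I expect to be the main obstacle, is to show that this eta quotient is supported only on triangular-number exponents. Concretely, I would prove the theta identity
\[\frac{E(q)^2E(q^6)}{E(q^2)E(q^3)} = \psi(q)-3q\,\psi(q^9),\qquad \psi(q)\coloneqq\sum_{k\ge0}q^{k(k+1)/2}=\frac{E(q^2)^2}{E(q)}.\]
Both $\psi(q)$ and $q\psi(q^9)=\sum_{m\ge0}q^{(3m+1)(3m+2)/2}$ are supported on triangular numbers (note $(3m+1)(3m+2)/2=9\cdot m(m+1)/2+1$), so the left-hand side is as well. The cleanest route to this identity is the classical $3$-dissection $\psi(q)=f(q^3,q^6)+q\psi(q^9)$ of Ramanujan's theta function together with routine eta-quotient algebra, or one can read the needed dissection off the tables in \cite{H}. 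This cancellation is delicate precisely because $E(q)^2E(q^6)/(E(q^2)E(q^3))=\phi(-q)\prod_{n\ge1}(1+q^{3n})$ has exponents of the form (square)$+$(multiple of $3$), which individually hit every residue modulo $5$; only after the full cancellation does the triangular support emerge.

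Finally I would assemble the pieces. Triangular numbers satisfy $k(k+1)/2\equiv 0,1,3\pmod 5$ (the value is periodic with period $5$ in $k$, running through $0,1,3,1,0$), so they never lie in $\{2,4\}\pmod 5$. Since every exponent appearing in $\sum_{n}s_3(n)q^n$ modulo $5$ is a triangular number plus a multiple of $5$, hence congruent modulo $5$ to a triangular number, the coefficients of $q^{5n+2}$ and $q^{5n+4}$ must vanish modulo $5$. This gives $s_3(5n+2)\equiv s_3(5n+4)\equiv 0\pmod 5$, and combined with the reduction $s_{4k+3}\equiv s_3$ from the first paragraph, it yields the theorem for all $k\geq 0$.
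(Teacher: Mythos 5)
Your proposal is correct and tracks the paper's proof almost step for step: the reduction of the full family to $d=3$ via Lemma \ref{internal_congs_mod_p}, the reduction $1+4q^n+q^{2n}\equiv 1-q^n+q^{2n}\pmod 5$, the factorization $1-q^n+q^{2n}=\frac{(1-q^{6n})(1-q^n)}{(1-q^{3n})(1-q^{2n})}$ together with $(1-q^n)^5\equiv 1-q^{5n}\pmod 5$ to split off $\prod_{n\geq 1}(1-q^{5n})^{-1}$, the same eta quotient $F(q)=\prod_{n\geq1}\frac{(1-q^{6n})(1-q^n)^2}{(1-q^{3n})(1-q^{2n})}$, the same expansion $F(q)=\psi(q)-3q\psi(q^9)$ (your $q\psi(q^9)$ is exactly the paper's $\sum_{t\geq 0}q^{(3t+1)(3t+2)/2}$, via $(3t+1)(3t+2)/2=9\,t(t+1)/2+1$), and the same observation that triangular numbers are never $2$ or $4$ modulo $5$. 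The one genuine point of divergence is how the key identity is justified: the paper recognizes $F(q)$ as the seventh of Mersmann's fourteen primitive eta-products of weight $1/2$ \cite[page 30]{123MF} and reads off the theta expansion from \cite[Theorem 1.2]{LemkeOliver13}, whereas you propose a self-contained proof from the classical $3$-dissection $\psi(q)=f(q^3,q^6)+q\psi(q^9)$. That route buys elementarity and avoids the modular-forms citation, but be aware it is thinner than you suggest: the dissection alone only rewrites $\psi(q)-3q\psi(q^9)$ as $f(q^3,q^6)-2q\psi(q^9)$, and identifying this with $\phi(-q)\prod_{n\geq1}(1+q^{3n})$ is itself a nontrivial theta identity (needing, say, the Jacobi triple product for $f(q^3,q^6)$ or the $3$-dissection of $\phi(-q)$, as in Hirschhorn \cite{H}), not mere "routine eta-quotient algebra." Since the identity is classical and true, this is a matter of detail to flesh out rather than a gap, and the rest of your argument, including the reduction via $\phi(5)=4$ and the residue computation $T_k\equiv 0,1,3\pmod 5$, is exactly the paper's.
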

\begin{proof}
We begin with the following generating function manipulations:  
\begin{eqnarray*}
\sum_{n=0}^\infty s_3(n) q^n 
&=& 
\prod_{n=1}^\infty \frac{1+4q^n+q^{2n}}{(1-q^n)^4}  \\  
&\equiv & 
\prod_{n=1}^\infty \frac{1-q^n+q^{2n}}{(1-q^n)^4}  \pmod{5} \\  
&=& 
\prod_{n=1}^\infty \frac{1+(-q^n)+(-q^n)^2}{(1-q^n)^4}  \\  
&=& 
\prod_{n=1}^\infty \frac{1-(-q^n)^3}{(1-(-q^n))(1-q^n)^4}  \\  
&=& 
\prod_{n=1}^\infty \frac{1+q^{3n}}{(1+q^n)(1-q^n)^4}  \\ 
&=& 
\prod_{n=1}^\infty \frac{(1-q^{6n})(1-q^n)^2}{(1-q^{3n})(1-q^{2n})(1-q^n)^5} \\ 
&\equiv & 
\prod_{n=1}^\infty \frac{(1-q^{6n})(1-q^n)^2}{(1-q^{3n})(1-q^{2n})}\prod_{n=1}^\infty \frac{1}{(1-q^{5n})} \pmod{5} 
\end{eqnarray*}
We now consider the power series representation of  
\begin{equation}
\label{modform7}
F(q):=\prod_{n=1}^\infty \frac{(1-q^{6n})(1-q^n)^2}{(1-q^{3n})(1-q^{2n})},
\end{equation}
which turns out to be a well--known modular form; indeed, it appears as the seventh modular form in Mersmann's list of the 14 primitive eta--products which
are holomorphic modular forms of weight 1/2.  See \cite[page 30]{123MF} and \cite[Entry A089812]{OEIS} for additional details.  After some elementary calculations (for example, by noting that $qF(q^8)$ appears in \cite[Theorem 1.2]{LemkeOliver13}), we find that 
$$
F(q)= \sum_{t=0}^\infty q^{t(t+1)/2} - 3\sum_{t=0}^\infty q^{(3t+1)(3t+2)/2}.
$$
As noted in the proof of Theorem \ref{4k1_mod5}, $5n+2$ and $5n+4$ can never be triangular numbers. Since $\frac{1}{2}(3t+1)(3t+2)$ is always triangular, we conclude that $s_3(5n+2) \equiv 0 \pmod{5}$ and $s_3(5n+4) \equiv 0 \pmod{5}$. The case where $k > 0$ follows from Lemma \ref{internal_congs_mod_p}.
\end{proof} 
Related to the above, we can also prove the following additional congruence family modulo 5.  
\begin{thm}
\label{d3_25_23}
For all $k\geq 0$ and $n\geq 0,$ 
$$s_{4k+3}(25n+23)  \equiv 0 \pmod{5}.$$  
\end{thm}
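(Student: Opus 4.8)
The plan is to reduce immediately to the case $k=0$ and then mimic the strategy of Theorem~\ref{d2_25_23}, extracting an arithmetic progression from the modular-form factor $F(q)$ and invoking Ramanujan's congruence $p(5m+4)\equiv 0\pmod 5$. By Lemma~\ref{internal_congs_mod_p} with $m=5$ (so $\phi(5)=4$), one has $s_{4k+3}(N)\equiv s_3(N)\pmod 5$ for every $N$, so it suffices to show $s_3(25n+23)\equiv 0\pmod 5$. For this I would start from the congruence already established in the proof of Theorem~\ref{4k3_mod5_5n}, namely
\[
\sum_{n=0}^\infty s_3(n)q^n \equiv F(q)\prod_{n=1}^\infty\frac{1}{1-q^{5n}} \pmod 5,
\]
where $F(q)=\sum_{t\ge 0} q^{t(t+1)/2} - 3\sum_{t\ge 0} q^{(3t+1)(3t+2)/2}$ and $\prod_{n\ge 1}(1-q^{5n})^{-1}=\sum_{m\ge 0}p(m)q^{5m}$.

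Next I would isolate the coefficient of $q^{25n+23}$. A term $q^e$ of $F(q)$ can combine with a term $q^{5m}$ of $\sum_{m\ge 0} p(m)q^{5m}$ to land on $q^{25n+23}$ only if $e\equiv 23\equiv 3\pmod 5$. The key computation is to determine the exponents of $F(q)$ lying in this residue class. A short check of the two exponent families modulo $5$ shows that $t(t+1)/2\equiv 3$ and $(3t+1)(3t+2)/2\equiv 3\pmod 5$ each force $t\equiv 2\pmod 5$. Writing $t=5s+2$ in either family and expanding then refines this: both $\tfrac{(5s+2)(5s+3)}{2}$ and $\tfrac{(15s+7)(15s+8)}{2}$ are congruent to $3$ modulo $25$, because in each case the $s$-dependent part is a multiple of $25\cdot\tfrac{s(s+1)}{2}$. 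Thus \emph{every} exponent of $F(q)$ that is $\equiv 3\pmod 5$ is in fact $\equiv 3\pmod{25}$.

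Finally I would assemble the congruence. If $q^e$ contributes to $q^{25n+23}$ with $e=25c+3$, then its partner is $q^{5m}$ with $5m=25(n-c)+20$, that is, $m=5(n-c)+4\equiv 4\pmod 5$. Ramanujan's congruence gives $p(m)\equiv 0\pmod 5$, so every term contributing to the coefficient of $q^{25n+23}$ vanishes modulo $5$; hence $s_3(25n+23)\equiv 0\pmod 5$, and the general statement follows from Lemma~\ref{internal_congs_mod_p}. I expect the only genuine obstacle to be the sharpening from modulo $5$ to modulo $25$ for the two exponent families: without it one knows merely $e\equiv 3\pmod 5$, which permits $m\not\equiv 4\pmod 5$ and therefore fails to trigger Ramanujan's congruence. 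Everything else is bookkeeping already set up in the proofs of Theorems~\ref{4k3_mod5_5n} and~\ref{d2_25_23}.
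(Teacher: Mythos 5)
Your proposal is correct and follows essentially the same route as the paper: reduce to $k=0$ via Lemma~\ref{internal_congs_mod_p}, reuse the congruence $\sum s_3(n)q^n \equiv F(q)\prod(1-q^{5n})^{-1} \pmod 5$ from Theorem~\ref{4k3_mod5_5n}, observe that the exponents of $F(q)$ hitting $3 \pmod 5$ must in fact be $3 \pmod{25}$, and conclude via Ramanujan's $p(5m+4)\equiv 0 \pmod 5$. Your explicit parametrization $t=5s+2$ is just a direct verification of the paper's assertion that no number $\equiv 8, 13, 18, 23 \pmod{25}$ is triangular, so the two arguments coincide in substance.
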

\begin{proof}
As with the proof of Theorem \ref{4k3_mod5_5n} above, we begin by considering $s_3(n)$.  Using the notation from above, recall that 
\begin{eqnarray}
\sum_{n=0}^\infty s_3(n) q^n 
&\equiv & 
\left( \sum_{t=0}^\infty q^{t(t+1)/2} - 3\sum_{t=0}^\infty q^{(3t+1)(3t+2)/2} \right) \prod_{n=1}^\infty \frac{1}{(1-q^{5n})} \pmod{5} \notag \\
&=& 
\left( \sum_{t=0}^\infty q^{t(t+1)/2} - 3\sum_{t=0}^\infty q^{(3t+1)(3t+2)/2} \right) \left( \sum_{n=0}^\infty p(n)q^{5n} \right).  \label{eq2523}
\end{eqnarray}
We then consider ways that $q^{25n+23}$ can arise when (\ref{eq2523}) is expanded.  We note that no numbers of the form $25n+8, 25n+13, 25n+18,$ or $25n+23$ can be represented as a triangular number.  Thus, the only way to obtain a term of the form $q^{25n+23}$ in (\ref{eq2523}) is to multiply by a term of the form $p(5m+4)q^{5(5m+4)}$ (in the same way as was discussed in the proof of Theorem \ref{d2_25_23}).  This implies that, for all $n\geq 0,$ $s_{3}(25n+23)  \equiv 0 \pmod{5}.$  The full result follows from Lemma \ref{internal_congs_mod_p}.
\end{proof}

We close this section with one last infinite family of congruences, this time modulo 11.  
\begin{thm}
\label{mod11_cong}
For all $n\geq 0,$ 
$$
s_{10k+1}(121n+111) \equiv 0 \pmod{11}.
$$
\end{thm}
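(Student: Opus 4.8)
The plan is to collapse the whole family onto the single function $s_1$ and then to study $\sum_{n\ge 0} s_1(n)q^n = \prod_{n=1}^\infty (1-q^n)^{-2}$ modulo $11$. First I would apply Lemma \ref{internal_congs_mod_p} with $m=11$ (so $\phi(11)=10$) and $r=1$, which gives $s_{10k+1}(n)\equiv s_1(n)\pmod{11}$ for all $k$ and $n$; thus it suffices to prove $s_1(121n+111)\equiv 0\pmod{11}$. Next, using the Frobenius congruence $\prod_{n=1}^\infty(1-q^n)^{11}\equiv \prod_{n=1}^\infty(1-q^{11n})\pmod{11}$ together with $-2\equiv 9\pmod{11}$, I would rewrite
\[
\sum_{n=0}^\infty s_1(n)q^n \;\equiv\; \frac{\prod_{n=1}^\infty(1-q^n)^{9}}{\prod_{n=1}^\infty(1-q^{11n})}\;=\;\Big(\prod_{n=1}^\infty(1-q^n)^{9}\Big)\sum_{m=0}^\infty p(m)q^{11m}\pmod{11}.
\]
Since $\sum_m p(m)q^{11m}$ is a series in $q^{11}$, the coefficient of $q^{121n+111}$ is a convolution of values $p(m)$ against the coefficients of $\prod(1-q^n)^{9}$ lying in the class $111\equiv 1\pmod{11}$.

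The next step is to expand the numerator. Writing $\prod(1-q^n)^{9}=\big[\prod(1-q^n)^{3}\big]^{3}$ and applying Jacobi's identity (Lemma \ref{Jacobi}) three times yields
\[
\prod_{n=1}^\infty(1-q^n)^{9}=\sum_{i,j,k\ge 0}(-1)^{i+j+k}(2i+1)(2j+1)(2k+1)\,q^{T_i+T_j+T_k},\qquad T_i=\tfrac{i(i+1)}{2}.
\]
Under $x=2i+1,\ y=2j+1,\ z=2k+1$ the exponent $a=T_i+T_j+T_k$ satisfies $8a+3=x^2+y^2+z^2$. The key arithmetic observation is that $a\equiv 1\pmod{11}$ is \emph{equivalent} to $8a+3\equiv 0\pmod{11}$; hence every triple contributing to $q^{121n+111}$ satisfies $x^2+y^2+z^2\equiv 0\pmod{11}$.

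I would then split the convolution according to the residue of the partition argument $m$ modulo $11$. A short computation shows that among exponents $a\equiv 1\pmod{11}$, the class $a\equiv 45\pmod{121}$ is exactly the one forcing $m\equiv 6\pmod{11}$ in the identity $a+11m=121n+111$; for those terms Ramanujan's well-known congruence $p(11m+6)\equiv 0\pmod{11}$ makes the contribution vanish outright, exactly as $p(5m+4)\equiv 0\pmod 5$ was used in Theorems \ref{d2_25_23} and \ref{d3_25_23}. What remains is the complementary statement: the coefficient of $q^a$ in $\prod(1-q^n)^9$ is divisible by $11$ for every $a\equiv 1\pmod{11}$ with $a\not\equiv 45\pmod{121}$.

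This last point is where I expect the real difficulty. In the modulo $5$ theorems the residues $8,13,18,23\pmod{25}$ were simply \emph{not represented} by the relevant quadratic forms, isolating a single surviving class; here, by contrast, three squares do represent $0$ nontrivially modulo $11$ (for instance $1^2+1^2+3^2=11$), so no class can be excluded on representability grounds and a genuine cancellation is required. Equivalently, one must prove the $11$-dissection statement that the signed sums $\sum(-1)^{i+j+k}(2i+1)(2j+1)(2k+1)$ over representations $x^2+y^2+z^2=M$ are divisible by $11$ whenever $M\equiv 0\pmod{11}$ but $M\not\equiv 0\pmod{121}$. I would attempt this either through a scaling symmetry $(x,y,z)\mapsto(tx,ty,tz)$, $t\in(\ZZ/11)^\times$, on the solution set, or—in the spirit of the invocation of Mersmann's list and Lemke Oliver's identity in the proof of Theorem \ref{4k3_mod5_5n}—by identifying $\prod(1-q^n)^9$ with a half-integral weight form and applying the Hecke operator $U_{11}$. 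Securing that dissection is the main obstacle; once it is established, pairing it with Ramanujan's $p(11m+6)\equiv 0\pmod{11}$ for the single surviving class completes the proof in the same fashion as the modulo $25$ results.
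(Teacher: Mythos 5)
Your opening move coincides with the paper's: apply Lemma \ref{internal_congs_mod_p} with $m=11$, $\phi(11)=10$, $r=1$ to reduce the whole family to the single congruence $s_1(121n+111)\equiv 0\pmod{11}$. But at that point the paper does not attempt any elementary dissection: since $\sum_{n\geq 0} s_1(n)q^n=\prod_{n\geq 1}(1-q^n)^{-2}$ is the generating function for $2$--colored partitions, it simply invokes Gordon's theorem \cite[Theorem 2]{Gor} (with $k=2$, $r=2$, $\alpha=1$), which gives $s_1(n)\equiv 0\pmod{11}$ whenever $24n\equiv 2\pmod{11^2}$, i.e.\ $n\equiv 111\pmod{121}$, and then finishes with the reduction lemma. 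So the two arguments agree on the first step and diverge completely on the second.

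The problem with your second step is not merely that the key ``dissection'' is left unproven --- the statement you reduce to is false. Take $a=1$: then $a\equiv 1\pmod{11}$ and $a\not\equiv 45\pmod{121}$, yet the coefficient of $q^1$ in $\prod_{n\geq 1}(1-q^n)^9$ is $-9\not\equiv 0\pmod{11}$. In your quadratic-form language, $M=8a+3=11$ satisfies $11\mid M$ and $121\nmid M$, and its only representations are the permutations of $1^2+1^2+3^2$ (i.e.\ $1=T_1+T_0+T_0$), giving the signed sum $3\cdot(-1)(3)(1)(1)=-9$. So term-by-term vanishing fails at the very first admissible exponent. Concretely, for $N=111$ the congruence $s_1(111)\equiv\sum_{b=0}^{10}c(11b+1)\,p(10-b)\pmod{11}$ has the nonzero term $c(1)p(10)=-9\cdot 42\equiv 7\pmod{11}$, so the theorem is true only because of cancellation \emph{across} different residue classes of $a$ modulo $121$; no argument that treats each class separately (isolating one class for Ramanujan's $p(11m+6)\equiv 0\pmod{11}$ and killing the rest coefficientwise) can succeed. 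This is exactly where the analogy with Theorems \ref{d2_25_23} and \ref{d3_25_23} breaks down, and it is why the paper resorts to Gordon's non-elementary result. Your fallback ideas do not rescue the plan: the scaling $(x,y,z)\mapsto(tx,ty,tz)$ sends representations of $M$ to representations of $t^2M$, so it does not act on the solution set of a fixed $M$; and a $U_{11}$/Hecke-operator argument would have to be applied to the full product $\prod(1-q^n)^9\cdot\prod(1-q^{11n})^{-1}$, which amounts to reproving the theorem itself (essentially Gordon's route), not to establishing your per-class divisibility claim.
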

\begin{proof}
In the work of Gordon \cite[Theorem 2]{Gor}, we set $k=2$ and $r=2,$ so that $\alpha=1.$  We then see that, if $24n\equiv 2\pmod{11^2},$ then $12n\equiv 1 \pmod{121}$ or $n\equiv 111\pmod{121}$.  Gordon's work then implies that, for all $n\geq 0,$ $s_1(121n+111) \pmod{11}$.  
The full result then follows from Lemma \ref{internal_congs_mod_p}.

\end{proof}


\section{Closing Thoughts}

We close this work with the following set of conjectured congruence families modulo 7:  
\begin{conjecture}
For all $k\geq 0$ and $n\geq 0,$ 
\begin{eqnarray*}
s_{6k+1}(49n+17) &\equiv & s_{6k+2}(49n+17) \ \equiv \ 0 \pmod{7}, \\
s_{6k+1}(49n+31) &\equiv & s_{6k+2}(49n+31) \ \equiv \  0 \pmod{7}, \\
s_{6k+1}(49n+38) &\equiv & s_{6k+2}(49n+38) \ \equiv \  0 \pmod{7}, \\
s_{6k+1}(49n+45) &\equiv & s_{6k+2}(49n+45) \ \equiv \  0 \pmod{7}.
\end{eqnarray*}
\end{conjecture}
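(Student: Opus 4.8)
The plan is to mirror the structure of the mod-$5$ results (Theorems \ref{4k1_mod5}--\ref{d3_25_23}) as far as the elementary dissections will carry, and to bring in modular-form machinery for the step they cannot reach. First I would reduce to the case $k=0$. Since $\phi(7)=6$, Lemma \ref{internal_congs_mod_p} with $m=7$ gives $s_{6k+1}(N)\equiv s_1(N)$ and $s_{6k+2}(N)\equiv s_2(N)\pmod{7}$ for all $k,N$, so the conjecture is equivalent to the eight congruences $s_1(49n+c)\equiv s_2(49n+c)\equiv 0\pmod{7}$ for $c\in\{17,31,38,45\}$. I would record immediately that all four residues satisfy $c\equiv 3\pmod 7$; writing $c=7c'+3$, the assertion is vanishing exactly for $c'\in\{2,4,5,6\}$, with no claim for $c'\in\{0,1,3\}$ (that is, $c=3,10,24$). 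Thus any correct argument must be sensitive to the residue modulo $49$, not merely modulo $7$, which already signals where the difficulty lies.

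Next I would put the generating functions into the form used throughout Section \ref{sec:congproofs}. Using $(1-q^n)^7\equiv 1-q^{7n}\pmod 7$ together with the product forms from Theorem \ref{thm:genfcnforlinks}, one has
\[
\sum_{n=0}^\infty s_1(n)q^n \equiv \frac{\prod_{n\geq 1}(1-q^n)^5}{\prod_{n\geq 1}(1-q^{7n})} \pmod{7}
\]
and
\[
\sum_{n=0}^\infty s_2(n)q^n \equiv \frac{\prod_{n\geq 1}(1-q^{2n})\prod_{n\geq 1}(1-q^n)^3}{\prod_{n\geq 1}(1-q^{7n})} \pmod{7}.
\]
Because $\prod_{n\geq 1}(1-q^{7n})^{-1}=\sum_{m\geq 0}p(m)q^{7m}$ is a series in $q^7$, extracting the coefficient of $q^{49n+c}$ reduces to understanding the behaviour modulo $49$ of the two ``cores'' $\prod(1-q^n)^5$ and $\prod(1-q^{2n})\prod(1-q^n)^3$, after which Ramanujan's congruence $p(7m+5)\equiv 0\pmod 7$ should do the killing, exactly as in the proofs of Theorems \ref{d2_25_23} and \ref{d3_25_23}.

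For $s_2$ I would push the elementary approach. By Jacobi's identity (Lemma \ref{Jacobi}) and the pentagonal number theorem (Lemma \ref{EulerPNT}) the core becomes the double theta sum $\sum_{j\geq 0,\,k}(-1)^{j+k}(2j+1)\,q^{\,j(j+1)/2+k(3k+1)}$, and I would analyse which pairs $(j,k)$ can land in each target class modulo $49$. As in Theorems \ref{4k1_mod5} and \ref{4k3_mod5_5n}, a contribution survives only if the coefficient $2j+1$ is a unit mod $7$ and the complementary factor is not forced to be $p(7m+5)$; the goal is to show that for $c\in\{17,31,38,45\}$ every surviving term is annihilated either by $7\mid 2j+1$ or by Ramanujan's congruence, while for $c\in\{3,10,24\}$ a genuine nonzero term escapes. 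This is a finite but delicate study of the form $j(j+1)/2+k(3k+1)\pmod{49}$.

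The fifth-power core for $s_1$ is where I expect the real obstacle to lie: unlike $\prod(1-q^n)^3$, the series $\prod(1-q^n)^5$ admits no single- or double-sum theta representation, so the elementary dissection behind the mod-$5$ results has no direct analogue. Here I would switch to modular forms: up to a fractional power of $q$ the relevant series is the eta-quotient $\eta(\tau)^5\eta(7\tau)^{-1}$, of weight $2$ and level $7$, and after multiplying by a suitable power of $\eta(7\tau)$ to clear the cusp poles one obtains a holomorphic form on which the operator $U_7$ (applied twice, to reach the modulus $49$) can be studied, with Sturm's bound reducing the four congruences to a finite verification; the same framework could in principle absorb the $s_2$ case uniformly. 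The crux, and presumably the reason this is left as a conjecture, is precisely the control of the $U_7$-action at the refined level of $49$ --- one must explain why exactly four of the seven classes over $3\pmod 7$ vanish, which is exactly the information discarded by the coarse mod-$7$ reductions.
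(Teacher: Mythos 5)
First, a point of calibration: the statement you set out to prove is left as an \emph{open conjecture} in the paper --- the authors give no proof and explicitly ask for one --- so there is no paper proof to compare against, and any submission here must be judged purely on whether it closes the problem. Your preliminary reductions are correct and consistent with the paper's machinery: Lemma \ref{internal_congs_mod_p} with $m=7$, $\phi(7)=6$ does reduce everything to $s_1$ and $s_2$; the reductions $\sum s_1(n)q^n \equiv \prod_{n\geq 1}(1-q^n)^5/\prod_{n\geq 1}(1-q^{7n}) \pmod 7$ and $\sum s_2(n)q^n \equiv \prod_{n\geq 1}(1-q^{2n})(1-q^n)^3/\prod_{n\geq 1}(1-q^{7n}) \pmod 7$ are right; and your observation that all four target residues lie in the class $3 \pmod 7$, while the classes $3, 10, 24 \pmod{49}$ are \emph{not} claimed to vanish, correctly pinpoints why any proof must see the modulus $49$ and not just $7$.

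However, the proposal is a plan, not a proof: both of its core steps are left undone, and neither is routine. For $s_2$ you reduce matters to a ``finite but delicate study'' of $j(j+1)/2 + k(3k+1) \pmod{49}$, but you never carry it out, and the mechanism you assume --- that every surviving term is individually annihilated either by $7 \mid (2j+1)$ or by Ramanujan's $p(7m+5)\equiv 0 \pmod 7$ --- is precisely what needs to be established. In the paper's mod-$5$ theorems (Theorems \ref{4k1_mod5}, \ref{d2_25_23}, \ref{4k3_mod5_5n}, \ref{d3_25_23}) the argument works because the offending residue classes are simply never represented by the theta exponents; here the class $3 \pmod 7$ manifestly \emph{is} represented (otherwise $c=3,10,24$ would vanish as well, and they do not --- e.g.\ $s_1(3)=10\not\equiv 0 \pmod 7$), so one must rule out representations landing in exactly the classes $17,31,38,45 \pmod{49}$ with unit coefficient; if even one such representation exists, the term-by-term kill collapses and genuine cancellation among terms would be needed, which is a different and harder kind of argument. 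For $s_1$ the gap is larger still: you name the relevant objects ($\eta(\tau)^5\eta(7\tau)^{-1}$, the operator $U_7$, Sturm's bound) but perform no computation and verify none of the holomorphy, weight, or level bookkeeping, and --- as you yourself concede in your final sentence --- controlling the $U_7$-action at the refined level of $49$ is ``the crux.'' Conceding the crux is an honest diagnosis of why the statement remains a conjecture, but it means your proposal does not prove it; it is a reasonable research program whose two essential steps remain open.
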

We would be very interested to see elementary proofs of the above results. \\

In addition, Theorems \ref{congs_mod_powers_2} and \ref{4k3_mod5_5n} provide three Ramanujan congruences for $s_{3}(n),$ namely 
\begin{align*}
s_{3}(2n+1) &\equiv 0 \pmod 2 \\
s_{3}(5n+2) &\equiv 0 \pmod 5 \\
s_{3}(5n+4) &\equiv 0 \pmod 5,
\end{align*}
by taking $d=3$ and $k=0,$ respectively. We would like to know if there are any other Ramanujan congruences for $s_{3}(n)$ and if any exist for $d \geq 4.$ We are especially interested in knowing if $s_{d}(n)$ satisfies only finitely many Ramanujan congruences for fixed $d \geq 3.$ 

\section{Acknowledgements}
The authors would like to thank the referees for their valuable comments and suggestions, and Larry Rolen for helpful conversations about this work.

\bibliographystyle{alpha} 
\bibliography{refs}

\begin{thebibliography}{BvdGHZ08}

\bibitem[AP22]{AndrewsPaule22}
George~E. Andrews and Peter Paule.
\newblock Mac{M}ahon's partition analysis {XIII}: {S}chmidt type partitions and
  modular forms.
\newblock {\em J. Number Theory}, 234:95--119, 2022.

\bibitem[APR01a]{AndrewsPauleRiese01a}
George~E. Andrews, Peter Paule, and Axel Riese.
\newblock Mac{M}ahon's partition analysis: the {O}mega package.
\newblock {\em European J. Combin.}, 22(7):887--904, 2001.

\bibitem[APR01b]{AndrewsPauleRiese01b}
George~E. Andrews, Peter Paule, and Axel Riese.
\newblock Mac{M}ahon's partition analysis. {VIII}. {P}lane partition diamonds.
\newblock {\em Adv. in Appl. Math.}, 27(2-3):231--242, 2001.
\newblock Special issue in honor of Dominique Foata's 65th birthday
  (Philadelphia, PA, 2000).

\bibitem[BS13]{BS}
Nayandeep~Deka Baruah and Bipul~Kumar Sarmah.
\newblock Identities and congruences for the general partition and
  {R}amanujan's tau functions.
\newblock {\em Indian J. Pure Appl. Math.}, 44(5):643--671, 2013.

\bibitem[BvdGHZ08]{123MF}
Jan~Hendrik Bruinier, Gerard van~der Geer, G\"{u}nter Harder, and Don Zagier.
\newblock {\em The 1-2-3 of modular forms}.
\newblock Universitext. Springer-Verlag, Berlin, 2008.
\newblock Lectures from the Summer School on Modular Forms and their
  Applications held in Nordfjordeid, June 2004.

\bibitem[Eul68]{Eul}
L.~Euler.
\newblock Remarques sur un beau rapport entre les séries des puissances tant
  directes que réciproques.
\newblock {\em \it Mémoires de l'académie des sciences de Berlin}, pages
  83--106, 1768.

\bibitem[Gor83]{Gor}
Basil Gordon.
\newblock Ramanujan congruences for {$p\sb{-k}\ ({\rm mod}\,11\sp{r})$}.
\newblock {\em Glasgow Math. J.}, 24(2):107--123, 1983.

\bibitem[Hir17]{H}
Michael~D. Hirschhorn.
\newblock {\em The power of {$q$}}, volume~49 of {\em Developments in
  Mathematics}.
\newblock Springer, Cham, 2017.
\newblock A personal journey, With a foreword by George E. Andrews.

\bibitem[LO13]{LemkeOliver13}
Robert~J. Lemke~Oliver.
\newblock Eta-quotients and theta functions.
\newblock {\em Adv. Math.}, 241:1--17, 2013.

\bibitem[Mac04]{MacMahon20}
Percy~A. MacMahon.
\newblock {\em Combinatory analysis. {V}ol. {I}, {II} (bound in one volume)}.
\newblock Dover Phoenix Editions. Dover Publications, Inc., Mineola, NY, 2004.
\newblock Reprint of {{\i}t An introduction to combinatory analysis} (1920) and
  {{\i}t Combinatory analysis. Vol. I, II} (1915, 1916).

\bibitem[{OEI}23]{OEIS}
{OEIS Foundation Inc.}
\newblock The {O}n-{L}ine {E}ncyclopedia of {I}nteger {S}equences, 2023.
\newblock Published electronically at \url{http://oeis.org}.

\end{thebibliography}

\end{document}